\newtheorem{thm}{Theorem}
\newtheorem{prop}[thm]{Proposition}
\newtheorem{lem}[thm]{Lemma}
\newtheorem{defn}{Definition}
\newtheorem{cor}[thm]{Corollary}
\font\smallit=cmti10
\newcommand{\mise}{mis\`{e}re }
\newcommand{\by}{\times}
\newcommand{\R}{\mathcal{R}}
\renewcommand{\L}{\mathcal{L}}
\newcommand{\N}{\mathcal{N}}
\renewcommand{\P}{\mathcal{P}}
\newcommand{\Gb}{\overline{G}}
\renewcommand{\v}{\phantom{'} \vert \phantom{'}}
\newcommand{\mpl}{mis\`{e}re-play}
\newcommand{\lopt}{\boldsymbol{L}}
\newcommand{\ropt}{\boldsymbol{R}}
\newcommand{\bs}{\boldsymbol}
\begin{document}
\begin{center}
\uppercase{\bf Dead ends in mis\`ere play: the mis\`ere monoid of canonical numbers}
\vskip 20pt
{\bf Rebecca Milley}\\
{\smallit Dept.~of Mathematics and Statistics, Dalhousie University, Halifax, Canada, B3H 4R2}\\
{\tt rkmilley@mathstat.dal.ca}\\
\vskip 10pt
{\bf Gabriel Renault}\\
{\smallit Univ.~Bordeaux, LaBRI, UMR5800, F-33400 Talence, France\\
CNRS, LaBRI, UMR5800, F-33400 Talence, France}
\\
{\tt gabriel.renault@labri.fr}\\ 
\end{center}
\vskip 30pt

\section*{Abstract}
We find the \mise monoids  of normal-play canonical-form integer and non-integer numbers.  These come as consequences of more general results for the universe of {\em dead-ending} games.  Left and right {\em ends} have previously been defined as games in which Left or Right, respectively, have no moves; here we define a dead left (right) end to be a  left (right) end whose options are all left (right) ends, and we define a dead-ending game to be one in which all end followers are dead.  We find the monoids and partial orders of dead ends, integers, and all numbers, and construct an infinite family of games that are equivalent to zero in the dead-ending universe.

\section*{Keywords} 
Combinatorial game; Partizan; Mis\`ere; Monoids; Dead-ending.

\section{Introduction}
In many {\em combinatorial games} (two-player games of perfect information and no chance), players take turns placing pieces on a board according to some set of rules.  Usually these rules imply that the board spaces available to a player on his or her turn are a subset of those available on the previous turn; games such as {\sc domineering}, {\sc col}, {\sc snort}, {\sc hex}, and {\sc nogo}, among many others, fit this description.  What sorts of properties do these {\em placement games} have over other games?  What can be said about their game trees?  In contrast to a game like {\sc maze} or {\sc konane}, placement games have the property that a player cannot `open up' moves for him or herself, or for the opponent; in particular, if a player has no available moves at some position of the game then they will have no moves in any follower\footnote{By {\em follower} we mean any position that can be reached from a given game position, by alternating or non-alternating play, including the original position itself.} of that position.  This particular property, which we call {\em dead-ending}, is the focus of the present paper. 

A {\em left end} is a game position with no options for the player we call Left, and a {\em right end} is a position with no options for the player Right.  A game with no options for either player (called the {\em zero game}) is both a left end and a right end.  
  We can thus define {\em dead-ending} games as follows.

\begin{defn} A left (right) end is a {\em dead end} if every follower is also a left (right) end.  A game $G$ is called {\em dead-ending} if every end follower of $G$ is a dead end. The set of all dead-ending games is denoted $\mathcal{E}$.\end{defn}

In addition to the games listed above, many  well-studied positions from normal-play game theory have the dead-ending property: integers are dead ends, and non-integer numbers, all-small games, and all {\sc hackenbush} positions are dead-ending.  The set of all dead-ending games is thus a meaningful (and large) universe to consider. Restricted universes are of particular interest to those studying {\em mis\`ere} games (where the last player to move loses), since the restrictions may reintroduce some of the algebraic structure that is lost in general \mise play.  The dead-ending universe is a natural extension of the dicot\footnote{In a {\em dicot} game (called {\em all-small} in normal play), the position and every follower satisfies the property that either both players have a move or the game is over. These games are trivially dead-ending, as no follower can be a non-zero end.} universe, which has been the focus of recent research in \mise game theory (see \cite{Allen2009},\cite{Allen2011}, and \cite{McKayMN}, for example).

In this paper we establish some basic results for dead-ending games and demonstrate that several significant subuniverses (ends, integers, and non-integer numbers) have many of the `nice' algebraic properties that are missing from general mis\`ere play.  More specifically, we find the {\em mis\`ere monoids} of these sets of games, and determine the associated partial orders.  The concept of a \mise monoid, and other prerequisite material, is discussed in Section 1.1.

\subsection{General \mise background}
By convention, the players Left and Right are female and male, respectively.  Under {\em normal play}, the first player unable to move in a game loses; the less-studied and less-structured ending condition known as {\em \mise play} declares that the first player unable to move is the winner.
Games or {\em positions} are defined in terms of their options: $G=\{G^{\lopt} \v G^{\ropt} \}$, where 
$G^{\lopt}$ is the set of positions $G^L$ to which Left can move in one turn, and similarly for $G^{\ropt}$.
  The simplest game is the zero game, $0=\{\cdot \v \cdot\}$, where the dot indicates an empty set of options.

In both play conventions, the outcome classes {\em next} ($\N$), {\em previous} ($\P$), {\em left} ($\L$), and \textit{right} ($\R$) are partially ordered as shown in Figure \ref{NPLR},
with Left preferring moves toward the top and Right preferring moves toward the bottom.  We use $\N^-$, $\P^-$, $\L^-$, and $\R^-$ to denote the outcome classes under \mise play.
We also use the outcome functions $o^-(G)$ and $o^+(G)$ to distinguish between the \mise and normal outcomes, respectively.

\begin{figure}[htp]
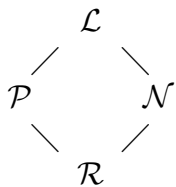

\begin{center}
\begin{tabular}{c}
$\L$\\
$\diagup \quad \quad \diagdown$\\
$\P \quad \quad \; \;	\quad \N$\\
$\diagdown \quad \quad \diagup$\\
$\R$\\
\end{tabular}
\end{center} 
\caption{The partial order of outcome classes.}
\label{NPLR}
\end{figure}
  
Many definitions from normal-play game theory\footnote{A complete overview of normal-play game theory can be found in \cite{AlberNW2007}.} are used without modification for \mise games, including birthday, disjunctive sum, equality, and inequality.  Thus, for \mise games, 
$$G= H \textrm{ if } o^-(G+X)= o^-(H+X) \textrm{ for all games } X,$$
$$G\geq H \textrm{ if } o^-(G+X)\geq o^-(H+X) \textrm{ for all games } X.$$
In normal-play, the \textit{negative} of a game is defined recursively as $-G=\{-{G}^{\ropt} | 
{-G}^{\lopt}\}$, and is so-called because $G+ (-G)=0$ for all games $G$.  Under general mis\`{e}re play, however, this property holds only if $G$ is the zero game \cite{MesdaO2007}.  To avoid confusion and inappropriate cancellation, we write $\overline{G}$ instead of $-G$ and refer to this game as the {\em conjugate} of $G$.

In normal-play games, there is an easy test for equality: $G=0$ if and only if $o^+(G)=\P$, and so $G=H$ if and only if $o^+(G-H)=\P$.
 In \mpl, no such test exists.  Equality of \mise games is difficult to prove and is, moreover, relatively rare: for example, besides $\{\cdot \v \cdot\}$ itself, there are no games equal to the zero game under \mise play \cite{MesdaO2007}.  As with normal-play, we can reduce a \mise game to a unique canonical form by eliminating dominated options and bypassing reversible ones \cite{siege}, but instances of domination (inequality) and reversibility are much less common under \mise play.
Plambeck (see \cite{Plamb2005} and \cite{PlambS2008}, for example) introduced a partial solution to these challenges: modify the definitions of equality and inequality by restricting the game universe. Specifically, given a set of games $\mathcal{U}$, {\em \mise equivalence (modulo} $\mathcal{U}${\em )} is defined by 
$$G\equiv H \textrm{ (mod } \mathcal{U} \textrm{) if } o^-(G+X)= o^-(H+X) \textrm{ for all games } X \in \mathcal{U},$$
while {\em \mise inequality (modulo} $\mathcal{U}${\em)} is defined by
$$G\geqq H \textrm{ (mod } \mathcal{U} \textrm{) if } o^-(G+X)\geq o^-(H+X) \textrm{ for all games } X \in \mathcal{U}.$$
We use the words {\em equivalent} and {\em indistinguishable} interchangeably, and if $G\not \equiv H$ (mod $\mathcal{U}$) then $G$ and $H$ are said to be {\em distinguishable}.  If $G\not \geqq H$ and $G\not \leqq H$ then $G$ and $H$ are {\em incomparable}, and we write $G||H$. In this paper we use the symbol $\gneqq$ to indicate strict modular inequality.   The set $\mathcal{U}$ is often called the {\em universe}. 

Given a universe $\mathcal{U}$, we can determine the equivalence classes under 
$\equiv (\mbox{mod }\mathcal{U})$ 
and form the quotient semi-group $\mathcal{U}/\equiv$.  This quotient, together with the tetra-partition of
elements into the sets $\P^-$, $\N^-$, $\R^-$, and $\L^-$, is called the {\em \mise monoid} of the set $\mathcal{U}$,  denoted $\mathscr{M}_{\mathcal{U}}$.  It is usually desirable to have the set of games $\mathcal{U}$ closed under disjunctive sum; when a set of games is not already thus closed, we consider its {\em closure} or set of all sums of those games.  

 Plambeck and Siegel \cite{PlambS2008} 
used the monoid approach with 
much success in analyzing impartial \mise games. Allen \cite{Allen2009,Allen2011} extended the idea
to partizan game theory, investigating monoids within the dicot
universe. Most recently, McKay, Milley, and Nowakowski \cite{McKayMN}
 determined the monoid for a subset of the dicot universe  corresponding to instances of {\sc hackenbush}, and Milley, Nowakowski, and Ottaway \cite{MilleyNO} found the monoid for ends  in the universe of alternating (or `consecutive-move-ban') games.  In this paper we make a significant contribution to the literature by determining the \mise monoid of all normal-play canonical-form numbers.
We find this to be the same as the monoid of the closure of dead ends.

In Section 2 we establish some basic properties of the dead-ending universe.  In Section 3 we analyze ends in the dead-ending universe, which includes all integers in normal-play canonical form.  In Section 4 we extend this analysis to non-integer numbers and find that the monoid of all numbers is equivalent to the monoid of integers.  We also determine the partial orders of these subuniverses (modulo the subuniverse as well as modulo $\mathcal{E}$), and establish invertibility of the elements (modulo $\mathcal{E}$).  Finally, in Section 5 we discuss other dead-ending games, in the context of equivalency to zero modulo the dead-ending universe.

\section{Preliminary results}
We begin with some immediate consequences of the definition of dead-ending.  Lemmas \ref{follower_closed} and \ref{sum_closed} show that the universe $\mathcal{E}$ of dead-ending games is `closed' in two important respects: it is closed under followers and closed under disjucntive sum.

\begin{lem} \label{follower_closed} If $G$ is dead-ending then every follower of $G$ is dead-ending.\end{lem}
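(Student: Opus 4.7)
The plan is to exploit the transitivity of the follower relation, which is implicit in the footnote definition (any position reachable via alternating or non-alternating play from $G$, including $G$ itself). The statement then reduces to a one-line observation: a follower of a follower of $G$ is a follower of $G$.

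More concretely, I would let $H$ be an arbitrary follower of $G$, and let $E$ be an arbitrary end follower of $H$. The goal is to show $E$ is a dead end. Since $E$ is a follower of $H$ and $H$ is a follower of $G$, transitivity gives that $E$ is a follower of $G$. Because $E$ is an end, it is therefore an end follower of $G$, and since $G$ is dead-ending, the hypothesis immediately yields that $E$ is a dead end. As $E$ was arbitrary, every end follower of $H$ is a dead end, so $H$ is dead-ending by definition.

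There is no real obstacle here; the lemma is essentially a sanity check confirming that the property ``every end follower is a dead end'' descends to subpositions. The only thing worth a sentence in the write-up is the transitivity of the follower relation, which is a direct consequence of the definition given in the footnote. No induction on birthday or game tree is required, since the quantification in the definition of dead-ending is already over all end followers.
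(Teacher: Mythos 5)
Your proof is correct and takes essentially the same approach as the paper: both arguments rest on the transitivity of the follower relation, so that every end follower of $H$ is an end follower of $G$ and hence dead. Your write-up simply spells out the quantifiers a bit more explicitly than the paper's one-line version.
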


\begin{proof} 
If $H$ is a follower of $G$, then every follower of $H$ is also a follower of $G$; thus if $G$ satisfies the definition of dead-ending, then so does $H$.
\end{proof}

\begin{lem} \label{sum_closed}If $G$ and $H$ are dead-ending then $G+H$ is dead-ending. \end{lem}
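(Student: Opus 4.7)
The plan is to unpack the definitions directly and rely on Lemma~\ref{follower_closed}. Let $F$ be an arbitrary end follower of $G+H$; I need to show $F$ is a dead end. Without loss of generality assume $F$ is a left end (the right-end case is symmetric under conjugation, or by an identical argument).

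First I would use the fact that every follower of a disjunctive sum decomposes as a sum of followers of the summands: write $F = G' + H'$ where $G'$ is a follower of $G$ and $H'$ is a follower of $H$. Since $F$ has no left options and the left options of $G'+H'$ are exactly those of $G'$ (summed with $H'$) together with those of $H'$ (summed with $G'$), both $G'$ and $H'$ must themselves be left ends. Now apply Lemma~\ref{follower_closed}: $G'$ is a dead-ending follower of $G$ that also happens to be a left end, so (taking $G'$ itself as an end follower of $G'$ in the definition of dead-ending) $G'$ is a dead left end. Likewise for $H'$.

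The final step is to verify that $F = G' + H'$ is a dead left end, i.e.\ that every follower of $F$ is a left end. Any such follower has the form $G'' + H''$ with $G''$ a follower of $G'$ and $H''$ a follower of $H'$. Because $G'$ is a dead left end, $G''$ is a left end; similarly $H''$ is a left end. Hence $G'' + H''$ has no left options in either summand and is itself a left end, as required.

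I don't anticipate a real obstacle here; the only thing to be careful about is the appeal to the decomposition of followers of a sum (which is standard and implicit in the definition) and making sure the definition of \emph{dead end} is applied to $G'$ itself via Lemma~\ref{follower_closed} rather than incorrectly assuming $G$ was already a dead end. Everything else is immediate from the definitions.
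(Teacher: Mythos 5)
Your proposal is correct and follows essentially the same route as the paper: decompose any end follower of $G+H$ as $G'+H'$, observe that both summands must be (dead) left ends since $G$ and $H$ are dead-ending, and conclude that all followers of $G'+H'$ are left ends. The only cosmetic difference is that you route the deadness of $G'$ through Lemma~\ref{follower_closed}, where the definition of dead-ending applied to $G$ already gives it directly (and your explicit treatment of all followers $G''+H''$, rather than just options, is if anything slightly more complete than the paper's wording).
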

\begin{proof}
Any follower of $G+H$ is of the form $G'+H'$ where $G'$ and $H'$ are (not necessarily proper) followers of $G$ and $H$, respectively.  If $G'+H'$ is a left end, then both $G'$ and $H'$ are left ends, which must be dead, since $G$ and $H$ are dead-ending.  Thus, any right options ${G'}^R$ and ${H'}^R$ are left ends, and so all options ${G'}^R+H'$ and $G'+{H'}^R$ of $G'+H'$ are left ends.   A symmetric argument holds if $G'+H'$ is a right end, and so $G+H$ is dead-ending.
\end{proof}

Under \mise play,
Left trivially wins any left end playing first.  In general, Left may or may not win a left end playing second; for example, the game $\{\cdot \v 1\}$ is a left end in $\N^-$.  If a (non-zero) left end is dead, however, then it is a win for Left playing first or second.

\begin{lem} \label{leftendL} If $G\not = 0$ is a dead left end then $G\in \L^-$, and if $G\not =0 $ is a dead right end then $G\in \R^-$.
\end{lem}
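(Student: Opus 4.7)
The plan is to prove the left-end claim by strong induction on the birthday of $G$ and then recover the right-end case from it by the symmetry of conjugation. Under the misère convention, Left wins any left end trivially when it is her turn to move, since she has no options; so the task reduces to showing that Left also wins $G$ when Right moves first.

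A preliminary structural remark is that any non-zero left end $G$ must have at least one right option, since otherwise $G = \{\cdot \v \cdot\} = 0$, contrary to assumption. Next I would observe that every right option $G^R$ is itself a dead left end: $G$ being a dead left end forces $G^R$ to be a left end, and every follower of $G^R$ is also a follower of $G$, hence a left end as well.

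For the inductive step, suppose Right plays first and moves to some $G^R$. If $G^R = 0$, then Left faces the zero game and wins immediately, as she has no options; otherwise $G^R$ is a non-zero dead left end of strictly smaller birthday, and by the inductive hypothesis $G^R \in \L^-$, so Left wins $G^R$ no matter who is to move next. In either case Left wins $G$ after Right's opening move, and combined with the trivial Left-to-move case this gives $G \in \L^-$. The dead right end statement then follows by applying the left-end result to the conjugate $\overline{G}$, using that $\overline{G}$ is a dead left end whenever $G$ is a dead right end.

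I do not foresee any serious obstacle here: the only subtle point is the observation that a non-zero left end cannot simultaneously be a right end, which ensures that Right always has an option available when it is his turn and that the induction has something to bite on.
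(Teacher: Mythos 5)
Your proof is correct and rests on the same key observation as the paper's: every right option of a dead left end is itself a left end, so Right's opening move hands Left an immediate misère win. The induction on birthday is superfluous, though---once Right moves to the left end $G^R$, Left is to move with no options and wins at once, so you never need the inductive hypothesis that $G^R\in\L^-$; the paper's proof is exactly this one-step argument.
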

\begin{proof} A left end is always in $\L^-$ or $\N^-$.  If $G$ is a dead left end then any right option $G^R$ is also a left end, so Right has no good first move.  Similarly, a dead right end is in $\R^-$.
\end{proof}

The following lemma, which describes a sufficiency condition for invertibility, applies generally to any mis\`ere universe.  In the present paper we apply it to various subsets of dead-ending games.

\begin{lem} \label{zerolemmaS}
Let $\mathcal{U}$ be any game universe closed under conjugation, and let $S\subseteq \mathcal{U}$ be a set of games closed under followers.  
If $G+\overline{G} + X\in \L^-\cup \N^-$ 
for every game $G \in S$ and every left end $X \in \mathcal{U}$, 
 then $G+\Gb \equiv 0$ (mod $\mathcal{U}$) for every $G\in S$.
\end{lem}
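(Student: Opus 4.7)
The plan is to verify $o^-(G+\overline{G}+X)=o^-(X)$ for every $G\in S$ and every $X\in\mathcal{U}$, which is exactly the assertion $G+\overline{G}\equiv 0\ (\mathrm{mod}\ \mathcal{U})$. Before starting the main induction I would extract a right-end counterpart of the hypothesis: because $\mathcal{U}$ is closed under conjugation, for any right end $Y\in\mathcal{U}$ the game $\overline{Y}$ is a left end in $\mathcal{U}$, so the assumption gives $G+\overline{G}+\overline{Y}\in\L^-\cup\N^-$; conjugating this inclusion and using $\overline{G+\overline{G}+\overline{Y}}=G+\overline{G}+Y$ together with the fact that conjugation swaps $\L^-$ with $\R^-$ and fixes $\N^-$ produces $G+\overline{G}+Y\in\R^-\cup\N^-$ for every right end $Y\in\mathcal{U}$.

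The outcome of any game is determined by which of the four conditions \emph{Left wins first}, \emph{Left wins second}, \emph{Right wins first}, \emph{Right wins second} it satisfies, so it suffices to show that each such condition on $X$ transfers to $G+\overline{G}+X$. I would prove this by simultaneous induction on $b(G)+b(X)$, where $b$ denotes birthday; the case $G=X=0$ is immediate. For the Left implications: if Left wins $X$ moving first, then either $X$ is a left end---so the hypothesis directly gives $G+\overline{G}+X\in\L^-\cup\N^-$---or Left has an option $X^L$ from which she wins moving second, and induction on $(G,X^L)$ shows Left wins $G+\overline{G}+X^L$ moving second, so $X\to X^L$ is a winning first move. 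If instead Left wins $X$ moving second, I answer each Right opening in $G+\overline{G}+X$: a move in $X$ to $X^R$ is met by applying induction on $(G,X^R)$ to identify the outcomes of $G+\overline{G}+X^R$ and $X^R$, from which Left already has a winning move; a move in $G$ to $G^R$ is met by the mirror reply $\overline{G}\to\overline{G^R}$, arriving at $G^R+\overline{G^R}+X$, where $G^R\in S$ by follower-closure of $S$, and induction on $(G^R,X)$ gives $o^-(G^R+\overline{G^R}+X)=o^-(X)\in\L^-\cup\P^-$; a move by Right in $\overline{G}$ is handled by the symmetric mirror. The Right implications are proved by the identical argument applied to the right-end hypothesis from the first paragraph.

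The main obstacle is really only bookkeeping: once the induction measure $b(G)+b(X)$ is fixed, every strategic response either decreases $b(X)$ (leaving $G$ unchanged) or passes to a proper follower of $G$ (leaving $X$ unchanged), so each appeal to the induction hypothesis is valid, and follower-closure of $S$ is what keeps the mirror step inside $S$. The end-condition hypothesis itself is used only in the single sub-case where $X$ is a left end (and, symmetrically, where $X$ is a right end)---exactly the place where the normal-play mirror strategy breaks down under \mise play and requires the extra input.
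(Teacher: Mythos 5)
Your proof is correct and follows essentially the same route as the paper's: mirror Right's moves between $G$ and $\overline{G}$ (using follower-closure of $S$ and induction), otherwise follow the winning line in $X$, invoke the left-end hypothesis exactly when $X$ is exhausted, and obtain the right-end version by conjugation. The only difference is that you formalize the paper's informal ``Left follows her usual strategy in $K$'' argument as an explicit double induction on $b(G)+b(X)$ with the four winning conditions treated separately, which is a welcome tightening rather than a new idea.
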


\begin{proof}
Let $S$ be a set of games with the given conditions. Since $\mathcal{U}$ is closed under conjugation,  by symmetry we also  have $G+\overline{G} + X\in \R^-\cup \N^-$ for every $G\in S$ and every {\em right} end $X\in \mathcal{U}$. 

Let $G$ be any game in $S$ and assume inductively that $H+\overline{H}\equiv 0$ (mod $\mathcal{U}$) for every follower $H$ of $G$. 
Let $K$ be any game in $\mathcal{U}$, and suppose Left wins $K$.  We must show that Left can win $G+\Gb+K$.  
Left should follow her usual strategy in $K$; if Right plays in $G$ or $\Gb$ to, say, $G^R+\Gb+K'$, with $K'\in \L^-\cup\P^-$, then Left copies his move and wins as the second player on $G^R+\Gb^L +K'= G^R + \overline{G^R}+K' \equiv 0+K'$, by induction.  Otherwise, once Left runs out of moves in $K$, say at a left end $K''$, she wins playing next on $G+\Gb+K''$ by assumption.  	
\end{proof}

The main argument above begins with the phrase `suppose Left wins $K$'. This appears to be ambiguous, or incomplete; does Left win $K$ playing first or playing second? The implied assumption with such a statement  is that the argument to follow holds for both cases.  

In subsequent  sections we refer to the two game functions below, which are well-defined for our purposes  --- namely, for numbers and ends. 

\begin{defn} The {\em left-length} of a game $G$, denoted $l(G)$, is the minimum number of consecutive left moves required for Left to reach zero in $G$.  The {\em right-length} $r(G)$ of $G$ is the minimum number of consecutive right moves required for Right to reach zero in $G$.
\end{defn}

 In general, left- and right-length are  well-defined if $G$ has a non-alternating path to zero for Left or Right, respectively, and if the shortest of such paths is never dominated by another option.  The latter condition ensures $l(G)=l(G')$ when $G\equiv G'$.  As suggested above, both of these conditions are met if $G$ is a (normal-play) canonical-form number or if $G$ is an end in $\mathcal{E}$.
If $l(G)$ and $l(H)$ are both well-defined then $l(G+H)$ is defined  and $l(G+H)=l(G)+l(H)$.  Similarly, when right-length is defined for $G$ and $H$, we have $r(G+H)=r(G)+r(H)$.

\section{Integers and other dead ends}

Let $\bs{n}$ denote the game $\{\bs{n-1} \v \cdot \}$, where $\bs{0}=0=\{\cdot \v \cdot\}$.  That is, $\bs{n}$ is the position with the same game tree as the integer $n$ in normal-play canonical form.  In this paper the term `integer' will always refer to such a position.  Note that we should distinguish between the game $\bs{n}$ and the number $n$, since, among other shortcomings, $\bs{n} \not > \bs{n-1}$ in general \mise play.  One property that does hold in both normal and mis\`ere play is that the disjunctive sum of positive integers $\bs{n}$ and $\bs{m}$ is the integer $\bs{n+m}$, although this is not generally true (in \mise games) if one of $n$ or $m$ is negative and the other positive.  In this section we prove that the restricted universe of integers under \mise play has much of the structure enjoyed by normal-play integers.

An integer is an example of a dead end: if $n>0$ then Right has no move in $\bs{n}$ and no move in any follower of $\bs{n}$.  Similarly, if $n<0$ then $\bs{n}$ is a dead left end.  Thus, the following results for ends in the dead-ending universe are true for all integers, modulo $\mathcal{E}$.

Our first result shows that when all games in a sum are dead ends, the outcome is completely determined by the left- and right-lengths of the games.  
\begin{lem}\label{end_outcome}
If $G$ is a dead right end and $H$ is a dead left end, then 
\begin{equation*}
o^-(G+H) = 
\begin{cases} 
\N^- & \text{if } l(G)=r(H), \\
\L^- & \text{if } l(G)<r(H), \\
\R^- & \text{if } l(G)>r(H). \\
\end{cases}
\end{equation*}
\end{lem}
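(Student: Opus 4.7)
The idea is that $G + H$ is essentially a two-sided race. Because $G$ is a dead right end, its entire game tree has no right options, so Right can only play in the $H$-component of the sum; symmetrically, Left can only play in $G$. Each Left move in $G$ strictly descends toward zero, requiring at least $l(G) = a$ moves total, and each Right move in $H$ requires at least $r(H) = b$ moves. Under mis\`ere play, the player with fewer forced moves runs out first on their own turn and wins, with ties broken by who moves first.

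I would prove all three cases together by structural induction on $G + H$. The base case is when $G = 0$ or $H = 0$: if both are zero, $G + H = 0 \in \N^-$, matching $a = b = 0$; if $G = 0$ and $H \neq 0$, then $G + H = H \in \L^-$ by Lemma \ref{leftendL}, matching $a = 0 < b$; and the case $H = 0$, $G \neq 0$ is symmetric. For the inductive step with $a, b \geq 1$: when $a = b$, I exhibit winning first moves for both players. Left picks some $G^L$ with $l(G^L) = a - 1$ (such an option exists by the definition of $l$, and $G^L$ is itself a dead right end since its followers are a subset of $G$'s followers), producing $G^L + H$ with $l(G^L) < r(H)$, which lies in $\L^-$ by induction, so Left wins going second; Right's first move is symmetric. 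When $a < b$, I verify that Left wins whoever moves first: if Left moves first she picks $G^L$ with $l(G^L) = a - 1 < b$ and by induction $G^L + H \in \L^-$; if Right moves first to any $H^R$, the key inequality $r(H^R) \geq r(H) - 1 = b - 1 \geq a$ together with induction places $G + H^R$ in $\L^- \cup \N^-$, so Left as next player wins. The case $a > b$ is symmetric.

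The main subtlety is that Right's move in $H$ need not decrease $r(H)$ at all: since $r(H^R)$ can exceed $r(H) - 1$, and even exceed $r(H)$ itself (if $H$ has a right option leading to a long non-optimal path), the naive measure $l(G) + r(H)$ is not monotone under play. Structural induction on $G + H$ sidesteps this issue, and the inequality $r(H^R) \geq b - 1 \geq a$ is exactly what is needed to guarantee that no Right option drops Left into an $\R^-$ position.
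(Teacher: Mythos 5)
Your proof is correct and rests on the same observation as the paper's: because $G$ is a dead right end and $H$ is a dead left end, each player can only ever move in their own component, so the sum is a pure race decided by comparing $l(G)$ with $r(H)$. The paper dispatches this in a single sentence, and your structural induction (including the inequality $r(H^R)\geq r(H)-1$ to handle non-optimal Right options) is just a careful formalization of that same race argument.
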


\begin{proof} 
Each player has no choice but to play in their own game, and so the winner will be the player who can run out of moves first. \end{proof}

We use Lemma \ref{end_outcome} to prove the following theorem, which demonstrates the invertibility of all ends in $\mathcal{E}$.  In particular, this shows that every integer  has an additive inverse  modulo $\mathcal{E}$.  

\begin{thm}\label{endszero}
If $G$ is a dead end then $G+\Gb \equiv 0$ (mod $\mathcal{E}$).
\end{thm}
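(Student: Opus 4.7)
The plan is to apply Lemma \ref{zerolemmaS} with $\mathcal{U}=\mathcal{E}$ and $S$ the set of all dead ends. First I would verify the two structural hypotheses of that lemma: $\mathcal{E}$ is closed under conjugation, since the property of being dead-ending is symmetric in the two players; and $S$ is closed under followers, since by definition every follower of a dead left (right) end is itself a dead left (right) end.

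The substantive step is verifying the outcome condition: $G+\Gb+X\in\L^-\cup\N^-$ for every dead end $G$ and every left end $X\in\mathcal{E}$. The crucial initial observation is that any left end $X$ lying in $\mathcal{E}$ is automatically a \emph{dead} left end, because $X$ is one of its own end followers; this is what makes $r(X)$ well-defined. Without loss of generality I would then assume $G$ is a dead right end; the case of $G$ a dead left end reduces to this one by applying the same argument to $\Gb$ in place of $G$, since $G+\Gb=\Gb+\overline{\Gb}$. Under this assumption $\Gb$ is a dead left end, and the sum $\Gb+X$ of two dead left ends is again a dead left end (every follower of $\Gb+X$ is a sum of followers of $\Gb$ and $X$, each of which is a dead left end, so the sum is itself a left end).

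To finish I would apply Lemma \ref{end_outcome} to the pair consisting of the dead right end $G$ and the dead left end $\Gb+X$. By additivity of right-length, $r(\Gb+X)=r(\Gb)+r(X)=l(G)+r(X)$, so comparing $l(G)$ with $r(\Gb+X)$ gives equality exactly when $X=0$ (outcome $\N^-$) and $l(G)<r(\Gb+X)$ otherwise (outcome $\L^-$). In both cases the outcome lies in $\L^-\cup\N^-$, and Lemma \ref{zerolemmaS} then delivers $G+\Gb\equiv 0$ (mod $\mathcal{E}$). No step is really an obstacle: the only delicate point is recognizing that the test left end $X$ may be taken to be dead, which is exactly what lets the length calculation go through cleanly.
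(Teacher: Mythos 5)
Your proposal is correct and follows essentially the same route as the paper's own proof: invoke Lemma \ref{zerolemmaS} with $S$ the set of dead ends, reduce to the case of a dead right end, and conclude via Lemma \ref{end_outcome} using additivity of left- and right-lengths. The only difference is cosmetic --- you spell out explicitly that a left end $X\in\mathcal{E}$ is automatically a \emph{dead} left end and that $\Gb+X$ is again a dead left end, points the paper leaves implicit.
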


\begin{proof}
Assume without loss of generality that $G\not = 0$ is a dead right end. 
Since every follower of a dead end is also a dead end, Lemma \ref{zerolemmaS} applies, with $S$ the set of all dead left and right ends. It therefore suffices to show $G+\Gb+X\in \L^-\cup \N^-$ for any left end $X$ in $\mathcal{E}$.  We have $l(G)=r(\Gb)$ and $r(X)\geq 0$,  so $l(G)\leq r(\Gb) +r(X) = r(\Gb+X)$, which gives $G+\Gb+X\in L^-\cup \N^-$  by Lemma \ref{end_outcome}.
\end{proof}

\begin{cor} \label{int_zero}
If $n$ is an  integer then $\bs{n}+\overline{\bs{n}}\equiv 0$ (mod $\mathcal{E}$).
\end{cor}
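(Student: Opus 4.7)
The plan is to observe that Corollary \ref{int_zero} is essentially a direct specialization of Theorem \ref{endszero}. All that needs to be checked is that every integer $\bs{n}$ (in the sense defined in Section 3, i.e.\ the position with the normal-play canonical-form game tree of $n$) is a dead end in the sense of the paper's Definition~1.

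First I would split into cases on the sign of $n$. If $n=0$, then $\bs{n}=0$ and $\bs{n}+\overline{\bs{n}}=0$ trivially equals $0$ modulo any universe. If $n>0$, then by induction on $n$, $\bs{n}=\{\bs{n-1}\v\cdot\}$ is a right end with only the left option $\bs{n-1}$, which is itself a right end by the inductive hypothesis; moreover every follower of $\bs{n}$ is either $\bs{n}$ itself or a follower of $\bs{n-1}$, so every follower is a right end. Thus $\bs{n}$ is a dead right end. Symmetrically, if $n<0$ then $\bs{n}$ is a dead left end (this is what the paragraph preceding Lemma~\ref{end_outcome} already observes).

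Having established that $\bs{n}$ is a dead end for every integer $n$, the conclusion $\bs{n}+\overline{\bs{n}}\equiv 0\ (\mathrm{mod}\ \mathcal{E})$ follows immediately by applying Theorem~\ref{endszero} with $G=\bs{n}$.

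There is no real obstacle here: all the work has been done in Theorem~\ref{endszero} (which in turn relies on Lemma~\ref{end_outcome} and Lemma~\ref{zerolemmaS}). The only verification needed is the purely structural observation that integer positions fit the definition of a dead end, and this is essentially by inspection of the recursive definition $\bs{n}=\{\bs{n-1}\v\cdot\}$ for $n>0$ (and its symmetric counterpart for $n<0$).
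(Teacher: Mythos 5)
Your proposal is correct and matches the paper's intent exactly: the paper offers no separate proof of Corollary~\ref{int_zero}, treating it as an immediate consequence of Theorem~\ref{endszero} together with the observation (made just before Lemma~\ref{end_outcome}) that every integer position $\bs{n}$ is a dead end. Your inductive verification that $\bs{n}$ is a dead end, plus the trivial $n=0$ case, is precisely the routine check the paper leaves implicit.
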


Note that equivalency in $\mathcal{E}$  implies equivalency in all subuniverses of $\mathcal{E}$; thus in the universe of integers alone, every game has an inverse. 

Lemma \ref{end_outcome}  shows that when playing a sum of dead ends, both players aim to exhaust their own options as quickly as possible.  This suggests that options with longer paths to zero will be dominated by shorter paths; in particular, we have that integers are totally ordered among dead ends, as established in Theorem \ref{int_order} below. Note that this ordering only holds in the subuniverse of the closure\footnote{Since the sum of a dead left end and a dead right end may not be a dead end (or any end at all), the set of dead ends is not closed under disjunctive sum; thus the universe we consider is the {\em closure} of dead ends, as defined in Section 1.1.} of dead ends, and not in the whole universe $\mathcal{E}$.  In fact, we see immediately in Theorem \ref{intfuzzy} that distinct integers are pairwise incomparable modulo $\mathcal{E}$, just as they are in the general mis\`ere universe.

In the following arguments we frequently use the fact that, when $H\in \mathcal{U}$ has an additive inverse modulo $\mathcal{U}$, $G\gneqq H$ (mod $\mathcal{U}$) if and only if $G+\overline{H}\gneqq 0$.

\begin{thm}\label{int_order}
If $n<m\in \mathbb{Z}$ then $\bs{n} \gneqq \bs{m}$ modulo the closure of dead ends.
\end{thm}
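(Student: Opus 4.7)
The plan is to exploit the additive inverses just established for integers. By Corollary~\ref{int_zero}, $\bs{m} + \overline{\bs{m}} \equiv 0$ modulo $\mathcal{E}$, and hence modulo the (smaller) closure of dead ends. So, by the remark immediately preceding the theorem, it suffices to prove the equivalent statement $\bs{n} + \overline{\bs{m}} \gneqq 0$ modulo the closure of dead ends.

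To compute outcomes in this universe I would rely entirely on Lemma~\ref{end_outcome}. Each game $Y$ in the closure of dead ends decomposes, after grouping summands by type, as $Y = P + Q$ with $P$ a dead left end and $Q$ a dead right end; set $\Delta(Y) := l(Q) - r(P)$. Using the additivity of $l$ and $r$ together with the fact that $l(\bs{k}) = k$ for $k > 0$ and $r(\bs{k}) = -k$ for $k < 0$, one checks that $\Delta$ is additive on the closure and that $\Delta(\bs{k} + Y) = \Delta(Y) + k$ for every integer $k$, regardless of sign (the game $\bs{k}$ simply joins the appropriate end summand). Lemma~\ref{end_outcome} then says the outcome of $Y$ is $\L^-$, $\N^-$, or $\R^-$ according as $\Delta(Y)$ is negative, zero, or positive; in particular all outcomes land in the totally ordered chain $\L^- > \N^- > \R^-$.

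With this in hand, for any $X$ in the closure,
\[
\Delta(\bs{n} + \overline{\bs{m}} + X) = \Delta(X) + n - m < \Delta(X),
\]
since $n - m < 0$. This puts the outcome of $\bs{n} + \overline{\bs{m}} + X$ at or above that of $X$ in the chain, giving $\bs{n} + \overline{\bs{m}} \geqq 0$. For strictness, taking $X = 0$ yields $\Delta(\bs{n} + \overline{\bs{m}}) = n - m < 0$, so $\bs{n} + \overline{\bs{m}} \in \L^-$ while $0 \in \N^-$, distinguishing the two games. The only piece of genuine care is the uniform sign-case verification that $\Delta(\bs{k} + Y) = \Delta(Y) + k$ irrespective of the sign of $k$ and of the shape of $Y$'s decomposition; once that is in hand, the result falls out of Lemma~\ref{end_outcome} and the additivity of the length functions.
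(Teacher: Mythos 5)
Your proof is correct and follows essentially the same route as the paper: reduce via Corollary~\ref{int_zero} to showing $\bs{n}+\overline{\bs{m}}\gneqq 0$, decompose an arbitrary element of the closure into a dead left end plus a dead right end, and compare left- and right-lengths via Lemma~\ref{end_outcome}. Your $\Delta$-function is just a tidy repackaging of the paper's length comparison (and you handle strictness explicitly, which the paper leaves implicit), but the underlying argument is the same.
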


\begin{proof}  
By Corollary \ref{int_zero}, it suffices to show $\bs{n}+\overline{\bs{m}}\gneqq 0$ (equivalently, $\bs{k}\gneqq 0$ for any negative integer $k$), modulo the closure of dead ends.  Let $X$ be any game in the closure of dead ends; then $X=Y+Z$ where $Y$ is a dead right end and $Z$ is a dead left end.  Suppose Left wins $X$ playing first; then by Lemma \ref{end_outcome}, $l(Y)\leq r(Z)$.  We need to show Left wins $\bs{k}+X$, so that $o^-(\bs{k}+X)\geq o^-(X)$.  Since $\bs{k}$ is a negative integer,  $r(\bs{k})$ is defined and $r(\bs{k})= -k >0$. ÊThus $l(Y)\leq r(Z) <r(Z)+r(\bs{k}) = r(Z+\bs{k})$, which gives $\bs{k} + Y +Z=\bs{k}+X\in \L^-\cup\N^-$, by Lemma \ref{end_outcome} .
\end{proof}

In general, $G \geq H$ under \mise play implies $G \geq H$ under normal play \cite{siege}; Theorem 8 shows this is not always the case for \mise inequality modulo  a restricted universe.  

\begin{thm}\label{intfuzzy}
If $n\not = m \in \mathbb{Z}$ then $\bs{n} || \bs{m}$ (mod ${\mathcal{E}}$).
\end{thm}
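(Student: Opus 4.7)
The plan is to reduce the claim to showing $\bs{k} || 0$ (mod $\mathcal{E}$) for every positive integer $k$, and then exhibit two explicit test games. Assume without loss of generality $n > m \geq 0$ (the case of opposite signs is trivial, since then $\bs{n}$ and $\bs{m}$ already have incomparable outcomes $\R^-$ and $\L^-$; two negatives follow by conjugation). Let $k = n - m > 0$. Since $\bs{n} = \bs{k} + \bs{m}$ as canonical games and $\bs{m} + \overline{\bs{m}} \equiv 0$ (mod $\mathcal{E}$) by Corollary \ref{int_zero}, we have $\bs{n} + \overline{\bs{m}} \equiv \bs{k}$ (mod $\mathcal{E}$). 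The invertibility of $\bs{m}$ then yields $\bs{n} \geqq \bs{m}$ (mod $\mathcal{E}$) iff $\bs{k} \geqq 0$ (mod $\mathcal{E}$), and analogously for $\leqq$. So it suffices to show $\bs{k} \not\geqq 0$ and $\bs{k} \not\leqq 0$ (mod $\mathcal{E}$).

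For $\bs{k} \not\geqq 0$, the trivial test $X = 0$ works: $o^-(\bs{k}) = \R^-$ by Lemma \ref{leftendL}, while $o^-(0) = \N^-$, and $\R^- \not\geq \N^-$ in the outcome partial order. For $\bs{k} \not\leqq 0$ I would use $X = \{0 \v \overline{\bs{k-1}}\}$, which lies in $\mathcal{E}$ since its only end followers, $0$ and $\overline{\bs{k-1}}$, are dead ends. A direct check gives $o^-(X) = \P^-$: Left's only move is to $0$, after which Right has no moves and wins; Right's only move is to the left end $\overline{\bs{k-1}}$, in which Left has no move and wins. The main task is then to verify that $o^-(\bs{k} + X) = \N^-$, an outcome incomparable to $\P^-$, which yields $\bs{k} + X \not\leqq X$ and hence $\bs{k} \not\leqq 0$ (mod $\mathcal{E}$).

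I would establish $o^-(\bs{k} + X) = \N^-$ as part of a cleaner three-way claim, proved by induction on $n$: $o^-(\bs{n} + X) = \L^-$ for $1 \leq n < k$, $\N^-$ for $n = k$, and $\R^-$ for $n > k$. In $\bs{n} + X$, Right's only option is $\bs{n} + \overline{\bs{k-1}}$, whose outcome is determined by Lemma \ref{end_outcome} from the sign of $n - (k-1)$; Left's two options are $\bs{n-1} + X$ (handled by the inductive hypothesis, with the base case $n = 1$ checked directly) and $\bs{n} \in \R^-$ (a losing response). Separating the three regimes and applying Lemma \ref{end_outcome} gives the claim; in particular, at $n = k$, Left has the winning move to $\bs{k-1} + X \in \L^-$ and Right has the winning move to $\bs{k} + \overline{\bs{k-1}} \in \R^-$, producing the $\N^-$ outcome. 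The main obstacle is this three-regime induction; both the reduction step and the $\not\geqq$ direction are immediate from results already established.
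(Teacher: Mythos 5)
Your core computations are sound: the reduction via Corollary~\ref{int_zero} of the whole theorem to the single statement $\bs{k}\,||\,0$ (mod $\mathcal{E}$) for $k>0$ is legitimate (the paper itself notes that invertibility lets one translate inequalities to comparisons with zero), the test game $X=\{0 \v \overline{\bs{k-1}}\}$ is dead-ending with $o^-(X)=\P^-$, and the three-regime induction giving $o^-(\bs{n}+X)=\L^-,\N^-,\R^-$ according as $n<k$, $n=k$, $n>k$ checks out, so $\bs{k}\not\leqq 0$ and (with $X=0$ and Lemma~\ref{leftendL}) $\bs{k}\not\geqq 0$. The one genuine error is your dismissal of the opposite-sign case: $\R^-$ and $\L^-$ are \emph{not} incomparable outcomes --- in Figure~\ref{NPLR}, $\L^-$ is the maximum and $\R^-$ the minimum --- so for $n>0>m$ the comparison at $X=0$ only yields $\bs{n}\not\geqq\bs{m}$, and the direction $\bs{n}\not\leqq\bs{m}$ (precisely the direction the paper must work for when $m<0$) is left unsupported as written. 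Fortunately your own machinery repairs it at once: $\overline{\bs{m}}$ is a positive integer, so $\bs{n}+\overline{\bs{m}}\equiv\bs{n-m}$ is again a sum of positive integers, and invertibility of $\bs{m}$ gives $\bs{n}\leqq\bs{m}$ (mod $\mathcal{E}$) iff $\bs{n-m}\leqq 0$, which your main lemma refutes; so fold the opposite-sign case into the same reduction rather than calling it trivial. (Also, $\bs{n}$ and $\bs{k}+\bs{m}$ are equal as mis\`ere games by the paper's remark on sums of positive integers, not identical game trees; your argument only needs the equivalence, but the phrase ``as canonical games'' should be softened.)

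With that patch, your route is a genuinely different one from the paper's. The paper distinguishes $\bs{n}$ and $\bs{m}$ directly, using the chain $\lambda_1=\{0 \v -\bs{1}\}$, $\lambda_j=\{0 \v \lambda_{j-1}\}$: it argues verbally that $\bs{n}+\lambda_n\in\L^-$ while $\bs{m}+\lambda_n\in\P^-\cup\R^-$ for $0\leq m<n$, and handles $m<0$ by a separate adjustment $X=\bs{k}+\lambda_{n+k}$ with $k=-m-1$. Your approach trades that two-case analysis for a uniform reduction to $\bs{k}$ versus $0$ plus one explicit outcome induction on a simpler (depth-two-then-integer) distinguishing position; the cost is the small induction, the benefit is that a single test game and a single computation cover all sign patterns once the reduction is stated correctly.
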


\begin{proof} Assume $n>m$.  Then we have $\bs{n} \not \geqq \bs{m}$ (mod $\mathcal{E}$), because $\bs{n}+\overline{\bs{m}}\in \R^-$ while $\bs{m}+\overline{\bs{m}}\equiv 0 \in \N^-$.  It remains to show $\bs{n} \not \leqq \bs{m}$.

Define a family of games $\lambda_k$ by 
$$\lambda_1 = \{0 \v -\bs{1}\}, \lambda_k = \{0 \v \lambda_{k-1}\}.$$
Note that $\bs{n}+\lambda_n \in \L^-$, since Left wins playing first or second by ignoring $\lambda_n$ and forcing Right to play there, bringing the game to $-\bs{1}$ with either Left or Right to play next.

If $n>m\geq0$ then $\bs{m}+\lambda_n$ is in $\P^-$ or $\R^-$: Left loses as soon as she plays in $\lambda_n$, and so plays only in $\bs{m}$, but (moving first) she will run out of moves in $\bs{m}$ before $\lambda_n$ is brought to $-1$.  Thus $\bs{n} \not \leqq \bs{m}$ in this case, since Left can win $\bs{n}+\lambda_n$ but not $\bs{m}+\lambda_n$.

If $m<0$ then let $k = -m-1$ and take $X=\bs{k} +\lambda_{n+k}$.  As above, $\bs{n}+\bs{k}+\lambda_{n+k} \in \L^-$.  However, $\bs{m}+\bs{k}+\lambda_{n+k} \equiv -\bs{1}+\lambda_{n+k}\in \N^-$ since each player can move to a position from which the opponent is forced to move to zero.  In this situation we see Left prefers $\bs{n}$ over $\bs{m}$, so again $\bs{n} \not \leqq \bs{m}$.
\end{proof}

Theorem \ref{intfuzzy} tells us that, modulo $\mathcal{E}$, the games $\{0,\bs{1} \v \cdot\}$ and $\{0 \v \cdot \}$ are distinguishable, as the option to $0$ does not in general dominate the option to $\bs{1}$.  Thus, in the dead-ending universe, there exist ends that are not integers.  However, if we restrict ourselves to the subuniverse of dead ends, then the ordering given in Theorem \ref{int_order} implies that every end reduces to an integer.  This fact is presented in the following lemma.

\begin{lem} \label{end_int}
If $G$ is a dead end then $G\equiv \bs{n}$, modulo the closure of dead ends, where $n = l(G)$ if $G$ is a right end and $n=-r(G)$ if $G$ is a left end.
\end{lem}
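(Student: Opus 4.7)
My plan is to reduce the problem to showing that $G$ plus the conjugate of the proposed integer is equivalent to zero in the relevant universe. Without loss of generality I assume $G$ is a (nonzero) dead right end, so $n = l(G)$ and I want $G \equiv \bs{n}$; the dead left end case will be symmetric (and $G=0$ is trivial). Since $\bs{n}$ is itself a dead end, Theorem \ref{endszero} tells us that $\overline{\bs{n}}$ is an additive inverse for $\bs{n}$ modulo $\mathcal{E}$, and therefore also modulo any subuniverse of $\mathcal{E}$. Consequently it suffices to prove $G + \overline{\bs{n}} \equiv 0$ modulo the closure of dead ends.

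The first structural step I would establish is that every $X$ in the closure of dead ends decomposes as $X = Y + Z$ with $Y$ a dead right end and $Z$ a dead left end. Elements of the closure are finite disjunctive sums of dead ends, so I just group the right-end summands and the left-end summands, after verifying by a short induction that a sum of dead right ends is again a dead right end (and symmetrically for left ends). This is where the dead property is essential: no follower can acquire right options ``out of nothing'', so the follower set of such a sum contains only right ends. With this in hand, $G + \overline{\bs{n}} + X = (G + Y) + (\overline{\bs{n}} + Z)$ is itself a sum of a dead right end and a dead left end, putting us squarely in the setting of Lemma \ref{end_outcome}.

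The remaining calculation is a comparison of lengths. Applying Lemma \ref{end_outcome} to $Y + Z = X$ gives an outcome determined entirely by the sign of $l(Y) - r(Z)$. Applying the same lemma to $(G+Y) + (\overline{\bs{n}} + Z)$, and using additivity of left- and right-length on dead ends, I get $l(G+Y) = n + l(Y)$ and $r(\overline{\bs{n}} + Z) = n + r(Z)$, so the comparison $l(G+Y)$ versus $r(\overline{\bs{n}} + Z)$ again reduces to $l(Y)$ versus $r(Z)$. Hence $o^-(G + \overline{\bs{n}} + X) = o^-(X)$ for every $X$ in the closure of dead ends, yielding $G + \overline{\bs{n}} \equiv 0$ and therefore $G \equiv \bs{n}$ modulo the closure.

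I don't foresee a serious obstacle: everything is bookkeeping with left- and right-length once the lemma-chain is lined up. The one delicate point is the decomposition $X = Y + Z$, which rests on the dead-end property being preserved under disjunctive sums within each ``side''; without that, the elements of the closure could have mixed behaviour and Lemma \ref{end_outcome} would not apply directly.
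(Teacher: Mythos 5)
Your proposal is correct, but it takes a genuinely different route from the paper. The paper argues by structural induction on $G$: each left option of a dead right end $G$ is, by induction, equivalent to an integer; Theorem \ref{int_order} (the total order of integers modulo the closure of dead ends) then makes the left options comparable, so all but the option of smallest left-length are dominated, and $G$ collapses to $\{G_1^L \v \cdot\}$, the canonical form of the integer $\bs{l(G)}$. You instead reduce the claim to $G+\overline{\bs{n}}\equiv 0$ (legitimate, since $\bs{n}+\overline{\bs{n}}\equiv 0$ modulo $\mathcal{E}$ by Theorem \ref{endszero} and hence modulo the subuniverse), decompose an arbitrary element of the closure as a dead right end plus a dead left end (using that sums of dead right ends are dead right ends), and finish with Lemma \ref{end_outcome} together with additivity of left- and right-length, so that $o^-(G+\overline{\bs{n}}+X)=o^-(X)$ because both comparisons reduce to the sign of $l(Y)-r(Z)$. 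Your argument is more elementary and self-contained: it avoids Theorem \ref{int_order} entirely and, with it, any appeal to removing dominated options or substituting equivalent options inside a restricted mis\`ere universe, which the paper's one-line simplification step implicitly relies on; indeed you could streamline further by comparing $o^-(G+X)$ with $o^-(\bs{n}+X)$ directly, making even Theorem \ref{endszero} unnecessary. What the paper's approach buys in exchange is structural information --- it exhibits $G$ as literally reducing to the integer's canonical form --- and brevity given the machinery already established.
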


\begin{proof}
Let $G$ be a dead right end (the argument for left ends is symmetric).  Assume by induction that every option $G_i^L$ of $G$ (necessarily a dead right end) is equivalent to the integer $l(G_i^L)$.  Modulo dead ends, by Theorem \ref{int_order}, these left options are totally ordered; thus $G=\{G_1^L \v \cdot\}$ for $G_1^L$ with smallest left-length.  Then $G$ is the canonical form of the integer $l(G_1^L)+1 = l(G)$.
\end{proof}

Lemma \ref{end_int} shows that  the closure of dead ends has precisely the same monoid as the set of canonical-form integers.  The game of {\sc domineering} on $1\by n$ and $n \by 1$ strips is an instance of these universes.
The results of this section  allow us to completely describe the monoid, which we present  in Theorem \ref{int_monoid}.  By $\mathbb{N}$ we mean the set of natural numbers, including zero.

\begin{thm} \label{int_monoid}
Under the mapping $\bs{n} \mapsto \alpha^{n},$
the \mise monoid of the set of normal-play canonical-form integers is
$$\mathscr{M}_{\mathbb{Z}} = \langle 1,\alpha,\alpha^{-1} \v \alpha \cdot \alpha^{-1}=1\rangle \cong (\mathbb{Z},+),$$
with outcome partition
$$\N^-=\{0\}, \L^-=\{\alpha^{-n} \v n \in \mathbb{N}\}, \R^- = \{\alpha^{n} \v n \in \mathbb{N}\},$$
and total ordering
$$\alpha^n\gneqq \alpha^m \Leftrightarrow n<m.$$
\end{thm}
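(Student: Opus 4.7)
The plan is to combine Corollary \ref{int_zero}, Lemma \ref{leftendL}, and Theorem \ref{int_order} to identify the monoid together with its presentation, outcome classes, and total ordering. The first step is to show that every element of $\mathscr{M}_{\mathbb{Z}}$ is represented by a single integer $\bs{n}$. The paper has already noted the position-level identity $\bs{a} + \bs{b} = \bs{a+b}$ whenever $a$ and $b$ share a sign, and unwinding the recursive definition gives $\overline{\bs{n}} = \bs{-n}$ as positions. So an arbitrary sum of integers groups as $\bs{p} + \overline{\bs{q}}$ with $p,q \geq 0$; assuming without loss of generality that $p \geq q$, rewrite $\bs{p} = \bs{p-q} + \bs{q}$ and cancel $\bs{q} + \overline{\bs{q}} \equiv 0 \pmod{\mathcal{E}}$ by Corollary \ref{int_zero}. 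What remains is $\bs{p-q}$, which shows surjectivity of $\mathbb{Z} \to \mathscr{M}_{\mathbb{Z}}$ and that the monoid operation corresponds to ordinary integer addition.

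For injectivity, suppose $\bs{n} \equiv \bs{m}$ in the monoid. Test with $X = \overline{\bs{n}}$, which lies in the universe. By Corollary \ref{int_zero} the left-hand side has outcome $o^-(\bs{n} + \overline{\bs{n}}) = \N^-$, while the reduction above gives $o^-(\bs{m} + \overline{\bs{n}}) = o^-(\bs{m-n})$. Therefore $o^-(\bs{m-n}) = \N^-$; but Lemma \ref{leftendL} forces any nonzero integer into $\L^- \cup \R^-$, so $m = n$. Combined with the previous step, this yields $\mathscr{M}_{\mathbb{Z}} \cong (\mathbb{Z},+)$, with the single relation $\alpha \cdot \alpha^{-1} = 1$ supplied by Corollary \ref{int_zero} at $n = 1$.

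The outcome partition is then read directly from Lemma \ref{leftendL}: positive integers are nonzero dead right ends, so lie in $\R^-$; negative integers are nonzero dead left ends, so lie in $\L^-$; and $\bs{0} = 0$ is trivially in $\N^-$. The total ordering is exactly the content of Theorem \ref{int_order}, which gives strict inequality modulo the larger closure of dead ends and hence also modulo the closure of integers (shrinking the universe only weakens the quantified condition). The only genuinely substantive step is the reduction of a mixed-sign sum to a single integer, and even that is a one-line invocation of Corollary \ref{int_zero}; everything else is assembly of results already established.
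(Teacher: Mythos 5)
Your proposal is correct and takes essentially the same route as the paper, which states Theorem \ref{int_monoid} with no separate proof, presenting it as the assembly of the section's results (Lemma \ref{end_outcome}, Corollary \ref{int_zero}, Theorem \ref{int_order}, Lemma \ref{end_int}). Your version --- cancellation via Corollary \ref{int_zero}, distinguishability by testing against $\overline{\bs{n}}$, outcomes from Lemma \ref{leftendL}, and the ordering from Theorem \ref{int_order} (where strictness in the smaller universe is supplied by your injectivity step rather than by universe-shrinking alone) --- is exactly that intended argument.
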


\section{Numbers}
\subsection{The monoid  of $\mathbb{Q}_2$.}
We say that a game $\bs{a}$ is a {\em non-integer number} in a universe $\mathcal{U}$ if it is equivalent, modulo $\mathcal{U}$, to the normal-play canonical form of a (non-integer) dyadic rational:
$$\bs{a}=\frac{\bs{m}}{\bs{2^j}}= \left \{ \frac{\bs{m}-\bs{1}}{\bs{2^j}} \bigg | \frac{\bs{m}+\bs{1}}{\bs{2^j}} \right \},$$
with $j>0$ and $m$ odd.    The set of all integer and non-integer (combinatorial game) numbers is thus the set of  {\em dyadic rationals}, which we denote by $\mathbb{Q}_2$.
As we did for integers in the previous section, we now determine the outcome of a general sum of dyadic rationals and thereby describe the \mise monoid of the closure of numbers.

Note that the sum of two non-integer numbers (even if both are positive) is not necessarily another number.  For example, in general \mise play, $\bs{1}+\bs{1}/\bs{2} = \{\bs{1}/\bs{2},\bs{1} \v \bs{2}\} \not = \bs{3}/\bs{2}$ implies that $\bs{1}/\bs{2}+\bs{1}/\bs{2} = \{\bs{1}/\bs{2} \v 1+\bs{1}/\bs{2} \} \not = \bs{1}$.   We will see that, unlike integers, the set of dyadic rationals is not closed under disjunctive sum even when restricted to the dead-ending universe; however, closure does occur when we restrict to numbers alone.  

Lemma \ref{rational_outcome} below --- analogous to  Lemma \ref{end_outcome} of the previous section --- shows that the outcome of a sum of numbers is determined by the left- and right-lengths of the individual numbers. To prove this, we require Lemma \ref{leftlengthR}, which establishes a relationship between the left- or right-lengths of numbers and their options; and to prove Lemma \ref{leftlengthR}, we need the following proposition.

\begin{prop} \label{propo}
If $a \in \mathbb{Q}_2 \setminus \mathbb{Z}$ then at least one of $\bs{a}^{RL}$ and $\bs{a}^{LR}$ exists, and either $\bs{a}^L=\bs{a}^{RL}$ or $\bs{a}^R=\bs{a}^{LR}$.
\end{prop}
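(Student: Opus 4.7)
Plan: My approach is to work directly with the arithmetic of the canonical form. Write $a = m/2^j$ with $j \geq 1$ and $m$ odd, so that by the given canonical form $\bs{a}^L = \bs{(m-1)/2^j}$ and $\bs{a}^R = \bs{(m+1)/2^j}$, with each fraction understood in lowest terms. The key elementary observation is that since $m$ is odd, $m-1$ and $m+1$ are consecutive even integers, so exactly one of them is congruent to $2 \pmod{4}$ while the other is divisible by $4$. This asymmetry will single out which of the two options $\bs{a}^L$, $\bs{a}^R$ is itself a non-integer dyadic with denominator $2^{j-1}$ after reducing.

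For the principal case $j \geq 2$, I would suppose without loss of generality that $m + 1 \equiv 2 \pmod{4}$; the other case follows by the symmetry $a \mapsto -a$. Setting $q = (m+1)/2$, which is odd, we have $\bs{a}^R = \bs{q/2^{j-1}}$, and because $j - 1 \geq 1$ this is still a non-integer dyadic rational in canonical form. Its canonical options are therefore $\bs{(q\pm 1)/2^{j-1}}$, and in particular $(\bs{a}^R)^L = \bs{(q-1)/2^{j-1}} = \bs{(m-1)/2^j} = \bs{a}^L$. Hence $\bs{a}^{RL}$ exists and equals $\bs{a}^L$. The symmetric case $m - 1 \equiv 2 \pmod{4}$ yields $\bs{a}^{LR} = \bs{a}^R$ by an identical computation with the roles of Left and Right reversed.

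The boundary case $j = 1$ needs to be handled separately because then $\bs{a}^L = \bs{(m-1)/2}$ and $\bs{a}^R = \bs{(m+1)/2}$ are already integers, which have options in only one direction, so the mod-$4$ argument above no longer applies cleanly. Here I would split on the sign of $m$. If $m > 0$ then $(m+1)/2 \geq 1$, so $\bs{a}^R$ is a positive integer and $(\bs{a}^R)^L$ exists, equal to $\bs{(m-1)/2} = \bs{a}^L$. If $m < 0$ then $(m-1)/2 \leq -1$, so $\bs{a}^L$ is a negative integer and $(\bs{a}^L)^R$ exists, equal to $\bs{(m+1)/2} = \bs{a}^R$. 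The main obstacle is essentially bookkeeping: ensuring that the arithmetic simplifications of $(m\pm 1)/2^j$ correspond to the genuine canonical-form options of the associated games, and being careful that when an option reduces all the way to an integer (the $j=1$ situation) the ``opposite'' option on that side disappears, which is exactly what forces the sign-of-$m$ dichotomy there.
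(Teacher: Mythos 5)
Your proof is correct and takes essentially the same route as the paper's: write $a=m/2^j$ with $m$ odd, use the fact that one of $m\pm1$ is $\equiv 2 \pmod 4$ to reduce the corresponding option to $\bs{q}/\bs{2}^{j-1}$ with $q$ odd, and read off its canonical options to get $\bs{a}^{RL}=\bs{a}^L$ or $\bs{a}^{LR}=\bs{a}^R$. In fact your separate treatment of the boundary case $j=1$ is more careful than the paper's argument, which applies the non-integer option formula even when $\bs{a}^L$ or $\bs{a}^R$ is an integer end (e.g.\ $a=\bs{3}/\bs{2}$ or $a=-\bs{3}/\bs{2}$, where the branch that holds is determined by the sign of $m$ rather than by $m \bmod 4$), although the paper's either/or conclusion remains true.
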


\begin{proof}
Let $\bs{a}=\bs{m}/\bs{2^j}$ with $j>0$ and $m$ odd.  If $m\equiv 1$ (mod 4) then 
$$\bs{a}^L=\frac{\bs{m}-\bs{1}}{\bs{2^j}}, 
\;
\bs{a}^R=\frac{\bs{m}+\bs{1}}{\bs{2^j}} = \frac{\frac{\bs{m}+\bs{1}}{\bs{2}}}{\bs{2}^{\bs{j}-\bs{1}}}
 = \left \{ \frac{\frac{\bs{m}-\bs{1}}{\bs{2}}}{\bs{2}^{\bs{j}-\bs{1}}} \; \bigg |  \;
 \frac{\frac{\bs{m}+\bs{3}}{\bs{4}}}{\bs{2}^{\bs{j}-\bs{1}}} \right \},$$
so $\bs{a}^L=\bs{a}^{RL}$.  Otherwise,  $m\equiv 3$ (mod 4) and then
$$\bs{a}^L=\frac{\bs{m}-\bs{1}}{\bs{2^j}}
= \frac{\frac{\bs{m}-\bs{1}}{\bs{2}}}{\bs{2}^{\bs{j}-\bs{1}}} = \left \{ \frac{\frac{\bs{m}-\bs{3}}{\bs{2}}}{\bs{2}^{\bs{j}-\bs{1}}} \; \bigg |  \; \frac{\frac{\bs{m}+\bs{1}}{\bs{2}}}{\bs{2}^{\bs{j}-\bs{1}}} \right \},
\bs{a}^R=\frac{\bs{m}+\bs{1}}{\bs{2^j}},$$
so $\bs{a}^R=\bs{a}^{LR}$.
\end{proof}

Note that if $a>0$ is a dyadic rational then $l(\bs{a})=1+l(\bs{a}^L)$, and if $a<0$ is a dyadic rational then  $r(\bs{a})=1+r(\bs{a}^R)$.  We also have the following inequalities for left-lengths of right options and right-lengths of left options, when $a$ is a non-integer dyadic rational.
 
\begin{lem} \label{leftlengthR}
If $a \in \mathbb{Q}_2 \setminus \mathbb{Z}$ is positive  then $l(\bs{a}^R)\leq l(\bs{a})$; if $a$ is negative then $r(\bs{a}^L)\leq r(\bs{a})$.
\end{lem}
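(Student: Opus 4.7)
By the symmetry of the statement under conjugation (the map $\bs{a} \mapsto \overline{\bs{a}}$ interchanges the two claims), it suffices to prove the positive case. I would argue by induction on the denominator exponent $j$, writing $\bs{a} = \bs{m}/\bs{2^j}$ with $m$ odd and positive, and split into two subcases according to $m \bmod 4$, in parallel with the proof of Proposition~\ref{propo}.

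When $m \equiv 1 \pmod 4$, the proposition supplies the identity $\bs{a}^L = \bs{a}^{RL}$. Left can then play $\bs{a}^R \to \bs{a}^{RL} = \bs{a}^L$ and continue along her shortest path to zero in $\bs{a}^L$, giving $l(\bs{a}^R) \leq 1 + l(\bs{a}^L) = l(\bs{a})$ directly, with no appeal to the inductive hypothesis.

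When $m \equiv 3 \pmod 4$, the proposition supplies instead $\bs{a}^R = \bs{a}^{LR}$, so the desired bound $l(\bs{a}^R) \leq l(\bs{a}) = 1 + l(\bs{a}^L)$ reduces to $l((\bs{a}^L)^R) \leq l(\bs{a}^L)$, which is precisely the lemma applied to $\bs{a}^L$. Since $m - 1 \equiv 2 \pmod 4$, the dyadic rational $(m-1)/2^j$ rewrites as $((m-1)/2)/2^{j-1}$ with $(m-1)/2$ odd and positive, so $\bs{a}^L$ is a positive non-integer dyadic rational of denominator exponent $j - 1$, and the inductive hypothesis applies whenever $j \geq 2$. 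The base case $j = 1$ must be handled directly: then both $\bs{a}^L$ and $\bs{a}^R$ are positive integers, and since $l(\bs{n}) = n$ by definition, the inequality collapses to the arithmetic identity $(m+1)/2 = 1 + (m-1)/2$.

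The main obstacle is confirming that the inductive step in the $m \equiv 3$ subcase is well-founded, i.e.\ that $\bs{a}^L$, once simplified, is still a \emph{non-integer} dyadic rational of strictly smaller denominator exponent, so that the inductive hypothesis of this very lemma has content on it rather than collapsing into an integer statement that the lemma does not address. This verification fails exactly when $j = 1$, which is precisely why the base case needs its own (easy) treatment.
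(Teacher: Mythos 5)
Your proposal is correct and takes essentially the same approach as the paper: both rest on Proposition~\ref{propo}, handling the case $\bs{a}^L=\bs{a}^{RL}$ by a direct length computation and the case $\bs{a}^R=\bs{a}^{LR}$ by applying the lemma inductively to $\bs{a}^L$. The only difference is bookkeeping: you induct on the denominator exponent $j$ and therefore need an explicit $j=1$ base case, whereas the paper inducts on the game itself and observes that in the second case $\bs{a}^L$ is automatically a non-integer because $\bs{a}^{LR}$ exists.
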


\begin{proof}
Assume $a>0$ (the argument for $a<0$ is symmetric).  Since $\bs{a}$ is in canonical form, both $\bs{a}^L$ and $\bs{a}^R$ are positive numbers.
If $\bs{a}^L= \bs{a}^{RL}$ then $l(\bs{a}^R)=1+l(\bs{a}^{RL})=1 + l(\bs{a}^L) = l(\bs{a})$.  Otherwise $\bs{a}^R=\bs{a}^{LR}$, by Proposition \ref{propo}; then $\bs{a}^L$ is not an integer because $\bs{a}^{LR}$ exists, so by induction we obtain $l(\bs{a}^R)=l(\bs{a}^{LR})\leq l(\bs{a}^L) = l(\bs{a})-1<l(\bs{a})$. 
\end{proof}

We can now determine the outcome of a general sum of numbers, both integer and non-integer.

\begin{lem} \label{rational_outcome}
If $\{a_i\}_{1 \leq i \leq n}$ and $\{b_i\}_{1 \leq i \leq m}$ are sets of positive and negative numbers, respectively, with $k= \sum_{i=1}^n l(\bs{a_i}) - \sum_{i=1}^m r(\bs{b_i})$, then
\begin{displaymath}
o^- \left (\sum_{i=1}^n \bs{a_i} + \sum_{i=1}^m \bs{b_i} \right ) = \left\{
     \begin{array}{lr}
       \L^- & \textrm{if } k <0 \\
       \N^- & \textrm{if } k =0 \\
       \R^- & \textrm{if } k >0.
     \end{array}
   \right.
\end{displaymath} 
\end{lem}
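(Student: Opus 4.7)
The plan is to prove the lemma by strong induction on game-tree complexity (for instance, the sum of birthdays of the summands), using $k$ as a signed potential: each player's optimal move perturbs $k$ by one in the expected direction, and the opponent cannot undo this in a single move. The guiding intuition, continuing the theme of Lemma~\ref{end_outcome}, is that $l$ and $r$ measure how quickly each player can exhaust their own commitments, and misère rewards whoever runs out first; Left wants $k$ small, Right wants $k$ large.

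I would first catalogue the effect of any legal move on $k$.  By definition of $l$, every positive $\bs{a_i}$ admits a left option $\bs{a_i}^L$ with $l(\bs{a_i}^L)=l(\bs{a_i})-1$, so Left has a ``preferred'' move in any positive component that decreases $k$ by exactly one; symmetrically, Right has a preferred move in any negative component that increases $k$ by one.  Conversely, Lemma~\ref{leftlengthR} implies that a right move inside a positive $\bs{a_i}$ lands in $\bs{a_i}^R$ with $l(\bs{a_i}^R)\le l(\bs{a_i})$, and that a left move inside a negative $\bs{b_i}$ lands in $\bs{b_i}^L$ with $r(\bs{b_i}^L)\le r(\bs{b_i})$; hence Right cannot \emph{increase} $k$ by playing in a positive, and Left cannot \emph{decrease} $k$ by playing in a negative.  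I would also record the easy fact that, for any negative non-integer $\bs{b_i}$, the option $\bs{b_i}^L$ is again strictly negative, so $r(\bs{b_i}^L)\ge 1$.

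With these bookkeeping facts the three cases fall out.  For $k<0$ (target $\L^-$): if Left is to move and some positive is present, she plays her preferred move to a position with $k'=k-1<0$, which the inductive hypothesis places in $\L^-$; if no positive is present, then $\sum l(\bs{a_i})=0$ and either all the negatives are integers (so Left has no move and wins) or at least one is a non-integer, and any left move of hers into it produces $\sum r'\ge 1$ and hence $k'\le -1<0$, again in $\L^-$ by induction.  If instead Right is to move, every option of his yields $k'\le 0$, so the successor lies in $\L^-\cup\N^-$ by induction and Left — either already winning or now the next player in an $\N^-$ position — still wins.  The case $k>0$ follows by the Left/Right conjugation symmetry of the statement, and the case $k=0$ reduces to the previous two: the mover either has no move (the whole sum is $0$ and they win) or plays their preferred move in a component of their own sign, reaching $|k'|=1$ in their own favour.

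The main obstacle will be precisely the sub-case $k<0$ with no positive component present and Left forced to move into a negative non-integer: here I must rule out the danger that her move swings $k$ up to $0$ or higher.  This is exactly what the observation $r(\bs{b_i}^L)\ge 1$ — grounded in the fact that $\bs{b_i}^L$ remains strictly negative — is designed to handle, keeping the post-move $k'$ safely negative.  The rest is routine accounting with Lemma~\ref{leftlengthR} and the inductive hypothesis.
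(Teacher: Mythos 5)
Your proposal is correct and takes essentially the same route as the paper's proof: induction on the sum, with $k$ as the controlling quantity, using $l(\bs{a}^L)=l(\bs{a})-1$ for the mover's preferred move, Lemma~\ref{leftlengthR} to bound the opponent's moves, symmetry for $k>0$, and the $G=0$ observation for $k=0$. The only cosmetic differences are that the paper handles the no-positive-component subcase by noting Left can never make the last move in a sum of negative numbers (where you instead use $r(\bs{b_i}^L)\geq 1$ and induction), and it answers Right's move in a $\bs{b_i}$ with an explicit Left reply rather than citing the $\N^-$ case of the inductive hypothesis; both variants are sound.
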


\begin{proof}
Let $G=\sum_{i=1}^n \bs{a_i} + \sum_{i=1}^m \bs{b_i}$. All followers of $G$ are also of this form, so assume the result holds for every proper follower of $G$.  Suppose $k<0$.  If $n=0$ then Left will run out of moves first because Left cannot move last in any negative number. So assume $n>0$.  Left moving first can move in an $\bs{a_i}$ to reduce $k$ by one (since $l(\bs{a_i}^L)=l(\bs{a_i})-1$), which is a left-win position by induction.  If Right moves first in an $\bs{a_i}$ then $k$ does not increase, since $l(\bs{a_i}^R)\leq l(\bs{a_i})$ by Lemma \ref{leftlengthR}, so the position is a left-win by induction; if Right moves first in a $\bs{b_i}$ then $k$ does increase by one, but Left can respond in an $\bs{a_i}$ (since $n>0$) to bring $k$ down again, leaving another left-win position, by induction.  Thus $G\in \L^-$ if $k<0$.

The argument for $k>0$ is symmetric.  If $k=0$ then either $G=0$ is trivially next-win, or both $n$ and $m$ are at least $1$ and both players have a good first move to change $k$ in their favour. 
\end{proof}

Lemma \ref{rational_outcome} shows that in general \mise play, the outcome of a sum of numbers is completely determined by the left-lengths and right-lengths of the positive and negative components, respectively.  From this we can conclude that, modulo the closure of canonical-form numbers, a positive number $\bs{a}$ is equivalent to every other number with left-length $l(\bs{a})$.  In particular, every positive number $\bs{a}$ is equivalent to the integer $\bs{l(a)}$.  This is Corollary \ref{modnumbers} below; together with Theorem \ref{rationalzero}, it will allow us to describe the monoid of canonical-form numbers.

\begin{cor} \label{modnumbers}
If $\bs{a}$ is a number, then
\begin{displaymath}
\bs{a} \equiv \left\{
     \begin{array}{lr}
       \bs{l(a)} & \textrm{if } a\geq 0, \\
       \bs{-r(a)}  & \textrm{if } a<0. 
     \end{array}
   \right.
\end{displaymath} 
\end{cor}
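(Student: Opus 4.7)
The proof should be a direct application of Lemma~\ref{rational_outcome}. The plan is to show that adding $\bs{a}$ and adding the claimed integer to any test game $X$ in the closure of canonical-form numbers produces the same value of the controlling quantity $k$, hence the same outcome.

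First I would fix an arbitrary $X$ in the closure of canonical-form numbers and write it as a disjunctive sum $X=\sum_{i=1}^{n} \bs{a_i} + \sum_{i=1}^{m} \bs{b_i}$ of positive and negative numbers (with any zero summands discarded). The case $a=0$ is trivial since then $\bs{a}=0=\bs{l(a)}$. For $a>0$, I would note that $\bs{a}$ and $\bs{l(a)}$ are both positive numbers and (by definition of left-length, since $\bs{l(a)}$ is a positive integer whose canonical left-length equals its value) satisfy $l(\bs{a})=l(a)=l(\bs{l(a)})$. Appending $\bs{a}$ and appending $\bs{l(a)}$ to $X$ therefore produces two positive-and-negative decompositions whose associated quantity
\[
k \;=\; \sum_{i} l(\bs{a_i}) - \sum_{i} r(\bs{b_i})
\]
in Lemma~\ref{rational_outcome} takes the same value. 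By that lemma, the two outcomes coincide. Since $X$ was arbitrary, $\bs{a}\equiv \bs{l(a)}$ modulo the closure of canonical-form numbers.

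The case $a<0$ is entirely symmetric: $\bs{a}$ and $\bs{-r(a)}$ are both negative numbers, $r(\bs{a})=r(a)=r(\bs{-r(a)})$, so their contribution to $k$ when summed with $X$ is identical, and Lemma~\ref{rational_outcome} again yields equal outcomes.

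There is no real obstacle here; the only thing to be careful about is that the test game $X$ used in the definition of $\equiv$ (mod closure of numbers) really is decomposable as a sum of positive and negative numbers with no leftover structure, which is exactly what ``closure'' means, and that the controlling quantity $k$ is additive, which is the content of the sentence in Section~2 that $l(\cdot)$ and $r(\cdot)$ are additive over disjunctive sums on their respective domains. With these in hand the argument is a one-line invocation of Lemma~\ref{rational_outcome}.
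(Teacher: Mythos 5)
Your proposal is correct and matches the paper's reasoning: the paper derives Corollary~\ref{modnumbers} as an immediate consequence of Lemma~\ref{rational_outcome}, exactly as you do, since replacing $\bs{a}$ by the integer $\bs{l(a)}$ (or $\bs{-r(a)}$ in the negative case) leaves the controlling quantity $k$ unchanged in any sum with a test position from the closure of canonical-form numbers. Your added care about decomposing the test position and about additivity of left- and right-length is exactly the implicit content of the paper's one-line deduction.
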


  As an example, the dyadic rational $\bs{1}/\bs{2}$ is equivalent to $\bs{l}(\bs{1}/\bs{2}) = \bs{1}$,  and $-\bs{3}/\bs{4} \equiv -\bs{r}(-\bs{3}/\bs{4} ) = \bs{-2}$, modulo $\mathbb{Q}_2$.
Note that these equivalencies do not hold in the larger universe of $\mathcal{E}$; indeed, as we  see in section 4.2, if $a\not = b$ are numbers then $a\not \equiv b$ (mod $\mathcal{E}$).

We see then that the closure of numbers is isomorphic to the closure of just integers; when restricted to numbers alone, every non-integer is equivalent to an integer.  Thus the \mise monoid of numbers, given below, is the same monoid presented in Theorem \ref{int_monoid}.  The partial order of the set of numbers, modulo $\mathcal{E}$, is described in Section 4.2.

\begin{thm} \label{rational_monoid}
Under the mapping $\bs{a} \mapsto \alpha^{n},$ where $n=l(G)$ if $a\geq0$ and $n=-r(G)$ if $a<0$,
the \mise monoid of the set of canonical-form dyadic rationals is
$$\mathscr{M}_{\mathbb{Q}_2} = \langle 1,\alpha,\alpha^{-1} \v \alpha \cdot \alpha^{-1}=1\rangle \cong (\mathbb{Z},+),$$
with outcome partition
$$\N^-=\{0\}, \L^-=\{\alpha^{-n} \v n \in \mathbb{N}\}, \R^- = \{\alpha^{n} \v n \in \mathbb{N}\}.$$
\end{thm}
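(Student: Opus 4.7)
The plan is to derive Theorem \ref{rational_monoid} as an almost immediate consequence of Corollary \ref{modnumbers} together with Theorem \ref{int_monoid}. The underlying idea is that every canonical-form dyadic rational collapses to an integer modulo the closure of numbers, so the quotient monoid of numbers coincides with the quotient monoid of integers already computed.

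First I would verify that the proposed assignment $\bs{a}\mapsto \alpha^{n}$ is compatible with the integer mapping of Theorem \ref{int_monoid}: when $\bs{a}=\bs{n}$ is already a canonical integer, one has $l(\bs{n})=n$ for $n\geq 0$ and $r(\bs{n})=-n$ for $n<0$, so both prescriptions produce the same exponent.

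Second, by Corollary \ref{modnumbers}, every canonical $\bs{a}\in \mathbb{Q}_2$ is equivalent, modulo the closure of numbers, to the canonical integer $\bs{l(\bs{a})}$ (if $a\geq 0$) or to $\bs{-r(\bs{a})}$ (if $a<0$). Thus every equivalence class of $\mathscr{M}_{\mathbb{Q}_2}$ is represented by a canonical integer, and the proposed map descends to the quotient. To upgrade this to an isomorphism with $\mathscr{M}_{\mathbb{Z}}$, I would check that distinct canonical integers $\bs{n}$ and $\bs{m}$ remain distinguishable in the larger universe (the closure of $\mathbb{Q}_2$ still contains $\mathbb{Z}$). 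This is immediate from Lemma \ref{rational_outcome}: a short case analysis on the signs of $n$ and $m$ produces a distinguishing integer $X$ (for instance $X=\overline{\bs{m}}$ when $n,m\geq 0$ with $n>m$) so that the value of $k$ in Lemma \ref{rational_outcome} differs for $\bs{n}+X$ and $\bs{m}+X$, placing the two sums in different outcome classes. Combined with Theorem \ref{int_monoid}, this yields the stated presentation $\langle 1,\alpha,\alpha^{-1}\mid \alpha\cdot \alpha^{-1}=1\rangle\cong(\mathbb{Z},+)$.

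Finally, the outcome partition is inherited from Theorem \ref{int_monoid} via Corollary \ref{modnumbers}: since $\bs{a}$ is equivalent to its integer representative modulo $\mathbb{Q}_2$, its outcome class agrees with that of the representative, and these classes are already tabulated in Theorem \ref{int_monoid}. I do not foresee any genuine obstacle; the proof amounts to bookkeeping that packages the three earlier results, with the only mild tedium being the sign-case check in the distinguishability step.
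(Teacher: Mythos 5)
Your proposal is correct and follows essentially the same route as the paper, which states Theorem \ref{rational_monoid} without a separate proof precisely because it is the packaging of Corollary \ref{modnumbers} (every number collapses to the integer given by its left- or right-length, modulo the closure of numbers) with Theorem \ref{int_monoid}, the distinguishability and outcomes coming from Lemma \ref{rational_outcome}. Your extra checks (compatibility of the exponent map on integers, distinguishability of distinct integers within the larger closure) are just the bookkeeping the paper leaves implicit.
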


As with integers, some of the structure found in the number universe is also present in the larger universe $\mathcal{E}$. We end this subsection with a proof that  all numbers --- not just integers --- are invertible in the universe of dead-ending games. We require the following lemma, an extension of Lemma \ref{rational_outcome}.

\begin{lem} \label{rational+X}
If $\{a_i\}_{1 \leq i \leq n}$ and $\{b_i\}_{1 \leq i \leq m}$ are sets of positive and negative numbers, respectively, and $\sum_{i=1}^n l(\bs{a_i}) - \sum_{i=1}^m r(\bs{b_i})<0$, then 
$$o^- \left (\sum_{i=1}^n \bs{a_i} + \sum_{i=1}^m {\bs{b_i}} +X \right ) = \L^-,$$
for any left end $X \in \mathcal{E}$.
\end{lem}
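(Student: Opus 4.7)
The plan is to argue by induction on the formal birthday of $G + X$, using Lemma \ref{rational_outcome} (applied to the number-only game $G$) to handle the subcases where the straightforward inductive step stalls. First I would observe that because $X \in \mathcal{E}$ is a left end, $X$ is an end follower of itself and therefore a dead left end; hence every follower of $X$ is again a dead left end, Left never has a move in $X$ nor in any of its followers, and $r(X)$ is well defined. Write $G = \sum_{i=1}^n \bs{a_i} + \sum_{i=1}^m \bs{b_i}$ with $k = \sum_{i=1}^n l(\bs{a_i}) - \sum_{i=1}^m r(\bs{b_i}) < 0$; we must show that Left wins $G + X$ both moving first and moving second.

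For Left moving first, I would split on $n$. If $n \geq 1$, Left plays $\bs{a_j} \to \bs{a_j}^L$ to bring the tempo to $k' = k - 1 < 0$, and the inductive hypothesis applied to $G' + X$ finishes the case. If $n = 0$, Lemma \ref{rational_outcome} gives $G \in \L^-$, so either Left has no move in $G$ at all (and hence no move in $G + X$, winning immediately by mis\`ere) or she has some winning first move in $G$; by the trichotomy of Lemma \ref{rational_outcome}, any such move necessarily yields $G' \in \L^-$, i.e.\ $k(G') < 0$, and induction on $G' + X$ closes the case.

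For Left moving second I would examine each of Right's possible moves. A move in $X$ leaves $k$ unchanged while reducing the birthday, so induction applies. A move $\bs{a_j} \to \bs{a_j}^R$ only weakly decreases the left-length contribution by Lemma \ref{leftlengthR}, so $k' \leq k < 0$ and induction again applies. The delicate case is a move $\bs{b_j} \to \bs{b_j}^R$, which gives $k' = k + 1$: when $k \leq -2$ this is still negative and induction handles it, but when $k = -1$ we get $k' = 0$ and the hypothesis does not apply directly. In this subcase I would again invoke Lemma \ref{rational_outcome}, which now places $G' \in \N^-$: so in $G' + X$ Left either has no move at all (and wins by mis\`ere, since $X$ is a left end) or has a first move in $G'$ pulling the tempo strictly below zero, in which case induction applied to the resulting $G'' + X$ completes the argument. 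This $k' = 0$ subcase --- where straight induction stalls and Left's winning strategy for $G'$ alone must be transplanted to the enlarged position $G' + X$ via a second appeal to Lemma \ref{rational_outcome} --- is the main obstacle of the proof.
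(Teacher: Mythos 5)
Your proof is correct and is essentially the paper's argument written out in full: the paper simply re-runs the induction behind Lemma \ref{rational_outcome} with the one extra case of a Right move in $X$ (answered there by a Left reply in $\bs{a_1}$), and your tempo bookkeeping, use of Lemma \ref{leftlengthR}, and response moves match that argument. The only cosmetic difference is that you dispose of Right's moves in $X$ by a direct appeal to the inductive hypothesis (the tempo $k$ and the hypotheses on $X'$ are unchanged) rather than by a response move, which incidentally also covers the $n=0$ situation that the paper's ``respond in $\bs{a_1}$'' phrasing leaves to the original argument.
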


\begin{proof}
The argument from Theorem \ref{rational_outcome} works again, since if Right uses his turn to play in $X$ then Left responds with a move in $\bs{a_1}$ to decrease $k$ by 1, which is a win for Left by induction.
\end{proof}

\begin{thm} \label{rationalzero}
If $a\in \mathbb{Q}_2$ then $\bs{a}+\overline{\bs{a}}\equiv 0$ (mod $\mathcal{E}$).
\end{thm}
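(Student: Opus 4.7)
The plan is to invoke Lemma \ref{zerolemmaS} with $\mathcal{U}=\mathcal{E}$ and $S=\mathbb{Q}_2$, the set of canonical-form dyadic rationals. Since every option of a canonical-form number is again a canonical-form number, $S$ is closed under followers; and conjugation preserves dead-endingness (it merely swaps the roles of left and right ends), so $\mathcal{E}$ is closed under conjugation. By Lemma \ref{zerolemmaS} it therefore suffices to verify that
$$\bs{a}+\overline{\bs{a}}+X\in\L^-\cup\N^-$$
for every number $\bs{a}\in\mathbb{Q}_2$ and every left end $X\in\mathcal{E}$.

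If $a=0$, the sum collapses to $X$ itself, a left end, so Left playing first has no move and wins under the misère convention. Assume $a>0$ (the case $a<0$ being symmetric). A positive canonical-form number always has a Left option, so Left's strategy is to open with $\bs{a}\to\bs{a}^L$, producing the position $\bs{a}^L+\overline{\bs{a}}+X$. This is a sum of numbers (with $\bs{a}^L$ nonnegative and $\overline{\bs{a}}$ negative) together with a left end in $\mathcal{E}$. Since $l(\bs{a}^L)=l(\bs{a})-1$ and $r(\overline{\bs{a}})=l(\bs{a})$ by symmetry, the imbalance $\sum l(\bs{a_i})-\sum r(\bs{b_i})$ featured in Lemma \ref{rational+X} is exactly $-1$; in the borderline case $\bs{a}^L=0$ (i.e.\ $l(\bs{a})=1$) we simply view the positive collection as empty and still obtain $-r(\overline{\bs{a}})=-1$. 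In either subcase, Lemma \ref{rational+X} applies and gives $\bs{a}^L+\overline{\bs{a}}+X\in\L^-$, so Right (to move) loses and Left wins the original sum.

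The argument is a direct combination of Lemmas \ref{zerolemmaS} and \ref{rational+X}, so no serious obstacle arises. The only point requiring care is the boundary case where Left's move sends $\bs{a}$ to $0$; one must be comfortable invoking Lemma \ref{rational+X} with an empty collection of positive summands, which its statement permits.
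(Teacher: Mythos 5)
Your proof is correct and takes essentially the same route as the paper: reduce via Lemma \ref{zerolemmaS} to showing $\bs{a}+\overline{\bs{a}}+X\in\L^-\cup\N^-$ for every left end $X\in\mathcal{E}$, then have Left open with $\bs{a}\to\bs{a}^L$ and conclude with Lemma \ref{rational+X} since $l(\bs{a}^L)-r(\overline{\bs{a}})=-1<0$ (with the $\bs{a}^L=0$ boundary case handled as you note). The paper additionally checks Right's possible first moves, proving the stronger claim that the sum is in $\L^-$ when $X\neq 0$, but as you implicitly observe this extra work is not required by the hypothesis of Lemma \ref{zerolemmaS}, which only concerns Left moving first.
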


\begin{proof}   Without loss of generality we can assume $a$ is positive.
Since every follower of a number is also a number, we can use Lemma \ref{zerolemmaS}.  That is, it suffices to show $\bs{a}+\overline{\bs{a}}+X \in \L^- \cup \N^-$ for any left end $X\in \mathcal{E}$.  If $X=0$ this is true by Lemma \ref{rational_outcome}.  If $X\not= 0$ then we claim $\bs{a}+\overline{\bs{a}}+X \in \L^-$; assume this holds for all followers of $\bs{a}$. 
Left can win playing first on $\bs{a}+\overline{\bs{a}}+X$ by moving to $\bs{a}^L$, since $l(\bs{a}^L)-r(\overline{\bs{a}})=l(\bs{a}^L)-l(\bs{a})<0$ implies $\bs{a}^L+\overline{\bs{a}}+X \in \L^-$ by Lemma \ref{rational+X}.  If Right plays first in $X$ then again Left wins by moving $\bs{a}$ to $\bs{a}^L$; if Right plays first in $\overline{\bs{a}}$ then Left copies in $\bs{a}$ and wins on $\bs{a}^L + \overline{\bs{a}^L}+X \in \L^-$ by induction. \end{proof}

Theorem \ref{rationalzero} shows that in dead-ending games like {\sc domineering}, {\sc hackenbush}, etc., any position corresponding to a normal-play canonical-form number has an additive inverse under mis\`ere play.  So, for example, the  positions in Figure \ref{hack} would cancel each other in a game of \mise {\sc hackenbush}.

\begin{figure}[htp] 
\begin{center}
\includegraphics[scale=0.6]{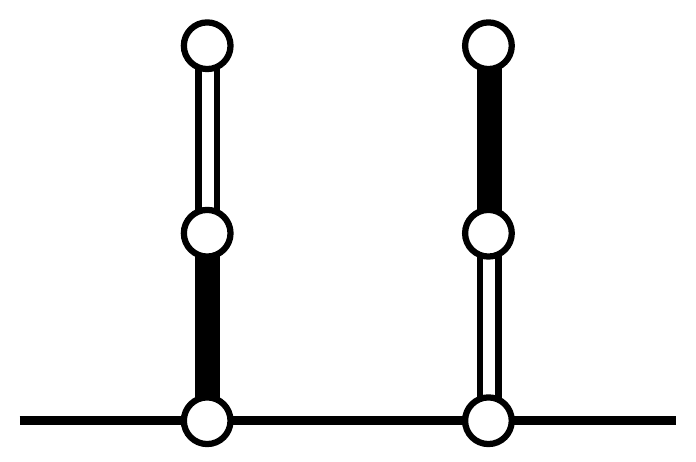}
\caption{Normal-play canonical forms of $1/2$ (left) and $-1/2$ (right) in {\sc hackenbush}.}
\label{hack}
\end{center}
\end{figure}

\subsection{The partial order of numbers inside $\mathcal{E}$.}
In section 3 we found that all integers were incomparable in the dead-ending universe.  We will see now that non-integer numbers are a bit more cooperative; although not totally ordered, we do have a nice characterization of the partial order of numbers in the universe $\mathcal{E}$.  First note that any two distinct numbers are distinguishable modulo $\mathcal{E}$; this is an immediate  corollary of the following theorem of \cite{DorbecRSS}, which extends a result of \cite{siege} referenced earlier.

\begin{thm} {\em \cite{DorbecRSS}} \label{DorbecRSSthm} If $G \geqq H$ (mod $\mathcal{E}$) then $G\geq H$ in normal play. 
\end{thm}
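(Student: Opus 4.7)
The plan is to establish the contrapositive: assuming $G \not\geq H$ in normal play, I will construct a dead-ending game $X \in \mathcal{E}$ with $o^-(G + X) < o^-(H + X)$, which directly contradicts $G \geqq H$ (mod $\mathcal{E}$). By the definition of normal-play inequality, there must exist some game $Y$ and some role (Left moving first, or Left moving second) for which Left wins $H + Y$ but loses $G + Y$ in normal play. The whole task therefore reduces to replacing an arbitrary $Y$ with a dead-ending witness $X$ that preserves this strategic distinction under the mis\`ere convention.

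I would proceed in two stages. First, reduce to the case of a second-player normal-play distinction by appealing to the outcome hierarchy in Figure \ref{NPLR}: if the only gap between $H + Y$ and $G + Y$ occurs at the first-player level, a standard tempo-adjustment argument converts it into a second-player gap. Second, construct $X$ by grafting $Y$ onto a family of dead-ending tokens --- concretely, sums of integers $\bs{N}$ and their conjugates $\overline{\bs{N}}$ whose left- and right-lengths can be tuned precisely. Using Lemma \ref{end_outcome} together with the additivity of $l$ and $r$ noted just after their definition, these tokens act as pure move-counters that translate normal-play parity into mis\`ere-play parity without interfering with any strategic play inside $Y$. An induction on the depth of $Y$ would then show that the mis\`ere strategies for Left on $H + X$ and for Right on $G + X$ are inherited, step-for-step, from the corresponding normal-play strategies on $H + Y$ and $G + Y$, completing the contrapositive.

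The hard part is that $Y$ itself need not be dead-ending: it may contain substructures in which a player ``opens up'' new options for the opponent, behaviour that has no counterpart within $\mathcal{E}$. The critical lemma required is therefore that the attached dead-ending tokens dominate any non-dead-ending behaviour of $Y$ --- intuitively, by choosing $N$ larger than the birthday of $Y$, the token component should force the mis\`ere outcome of $G + X$ (respectively $H + X$) to be governed entirely by the normal-play outcome of $G + Y$ (respectively $H + Y$). Verifying this domination, and matching it cleanly to the normal-play gap between $G$ and $H$, is where the real technical weight of the argument lies; once the gadget's behaviour is pinned down, the remainder is routine strategy-mimicking and induction on the height of $Y$.
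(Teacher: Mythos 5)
Note first that the paper itself offers no proof of this statement: it is imported verbatim from \cite{DorbecRSS}, so your sketch has to stand entirely on its own, and unfortunately it has a structural gap rather than a fixable technicality.

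The gap is that your distinguishing position $X$ must literally belong to $\mathcal{E}$, and your construction cannot deliver that. You start from an arbitrary normal-play witness $Y$ with $o^+(G+Y)<o^+(H+Y)$ and then ``graft'' or add dead-ending integer tokens $\bs{N}$, $\overline{\bs{N}}$ to it; but a game obtained this way is dead-ending only if $Y$ already was. Followers of a sum are sums of followers (see the proof of Lemma \ref{sum_closed}), so once the tokens are exhausted every non-dead end follower of $Y$ reappears as a follower of your $X$, and by Lemma \ref{follower_closed} this puts $X$ outside $\mathcal{E}$ no matter how large you take $N$. Choosing $N$ bigger than the birthday of $Y$ so that the tokens ``dominate'' the mis\`ere outcome is beside the point: if $X\notin\mathcal{E}$, the hypothesis $G \geqq H$ (mod $\mathcal{E}$) imposes no constraint at all on $o^-(G+X)$ versus $o^-(H+X)$, so no contradiction can be extracted and the contrapositive never closes. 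The standard way around this is to avoid an arbitrary $Y$ altogether: in normal play $G \not\geq H$ if and only if Right, moving first, wins $G+\overline{H}$, so the witness should be built from $\overline{H}$ itself (available inside the universe when $H$ is, as in the paper's application to numbers in $\mathcal{E}$) together with suitably chosen dead ends whose only role is to supply enough reserve moves that the mis\`ere ending condition never interferes with the normal-play strategy. Making that conversion precise is exactly the content of the cited result in \cite{DorbecRSS}; in your sketch it is deferred (along with the ``standard tempo-adjustment argument'' for first-player versus second-player gaps) and is precisely the part that cannot be waved through.
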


\begin{cor} \label{aneqb} If $a,b \in \mathbb{Q}_2$ and $a\not = b$ then $\bs{a}\not \equiv \bs{b}$ (mod $\mathcal{E}$).
 \end{cor}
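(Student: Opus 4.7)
The plan is to reduce the question of misère distinguishability modulo $\mathcal{E}$ to the totally understood normal-play order on dyadic rationals, using Theorem \ref{DorbecRSSthm} as the bridge. Since equivalence in a universe is the conjunction of the two inequalities, proving the contrapositive will be cleanest: assume $\bs{a} \equiv \bs{b}$ (mod $\mathcal{E}$) and derive $a = b$.

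First I would unpack the assumed equivalence as $\bs{a} \geqq \bs{b}$ (mod $\mathcal{E}$) and $\bs{b} \geqq \bs{a}$ (mod $\mathcal{E}$). Applying Theorem \ref{DorbecRSSthm} to each inequality separately yields $\bs{a} \geq \bs{b}$ and $\bs{b} \geq \bs{a}$ in normal play. Since $\bs{a}$ and $\bs{b}$ are the normal-play canonical forms of the dyadic rationals $a$ and $b$, their normal-play order coincides with the usual real-number order on $\mathbb{Q}_2$, so the two inequalities force $a = b$, contradicting the hypothesis $a \neq b$.

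There is no real obstacle here; the content of the corollary is entirely packaged inside Theorem \ref{DorbecRSSthm}. The only thing worth flagging in the write-up is the observation that two normal-play canonical-form numbers are equal as games if and only if they are equal as rationals, which is what lets the normal-play inequalities collapse to $a = b$. I would state this briefly rather than derive it, since it is a standard fact about normal-play numbers. The whole proof should fit in two or three sentences.
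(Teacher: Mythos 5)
Your argument is correct and is exactly the route the paper intends: the corollary is stated as an immediate consequence of Theorem \ref{DorbecRSSthm}, obtained by splitting $\equiv$ into the two modular inequalities, pushing each through the theorem to get normal-play inequalities, and using the fact that distinct canonical-form numbers are distinct (ordered as rationals) in normal play. Nothing is missing; your two-to-three sentence write-up matches the paper's level of detail.
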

 
Theorem \ref{DorbecRSSthm} says that if $\bs{a} \geqq \bs{b}$ (mod $\mathcal{E}$) then $a \geq b$ as real numbers (or as normal-play games).  The converse is clearly not true  for integers, by Theorem \ref{intfuzzy}; it is also not true for non-integers, since $\bs{1/2} - \bs{1/2}\in \N^-$ while $\bs{3/4}-\bs{1/2} \in \R^-$ (which the reader can verify), so that $\bs{1/2} \not \leqq \bs{3/4}$ (mod $\mathcal{E}$).  Theorem \ref{numorder} shows that the additional stipulation  $l(\bs{a}) \leq l(\bs{b})$ is sufficient for $\bs{a} \geqq \bs{b}$ (mod $\mathcal{E}$).  To prove this result we need the following lemmas.
As before, non-bolded symbols represent  actual numbers, so that `$a<b$' indicates inequality of $a$ and $b$ as rational numbers (or as normal-play games), and $a^L$ means the rational number corresponding to the left-option of the game $\bs{a}$ in canonical form.
  Recall that if $\bs{x} = \{\bs{x}^L | \bs{x}^R\}$ is in (normal-play) canonical form then $\bs{x}$ is the {\em simplest} number (i.e., the number with smallest birthday) such that $x^L <x< x^R$.  Thus, if $x^L<x,y<x^R$ and $x\not =y$ then $\bs{x}$ is nn $\bs{y}$.

\begin{lem} \label{orderlemma1}
If $a$ and $b$ are positive numbers such that \mbox{$a^L<b<a$}, then $l(\bs{a}^L)<l(\bs{b})$.
\end{lem}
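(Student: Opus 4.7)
My plan is to prove the slightly stronger statement $l(\bs{b}) \ge 1 + l(\bs{a}^L)$ by induction on the birthday of $\bs{b}$, leveraging the simplicity theorem for canonical-form numbers recalled just before the lemma.

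The key preparatory step is to establish the geometric inequality $a^L \le b^L$, via two applications of simplicity. First, simplicity of $\bs{a}$ in $(a^L, a^R)$, combined with $b \in (a^L, a^R) \setminus \{a\}$, forces $\bs{b}$ to have strictly larger birthday than $\bs{a}$, and hence strictly larger birthday than $\bs{a}^L$ as well. Second, simplicity of $\bs{b}$ in $(b^L, b^R)$ then forbids any number of birthday smaller than that of $\bs{b}$ --- in particular $\bs{a}^L$ --- from lying in $(b^L, b^R)$; combined with $a^L < b < b^R$, this pins down $a^L \le b^L$.

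Given this, the induction is immediate. If $a^L = b^L$, then the identity $l(\bs{b}) = 1 + l(\bs{b}^L)$ for positive canonical numbers (noted in the paragraph following Proposition \ref{propo}) gives $l(\bs{b}) = 1 + l(\bs{a}^L)$ directly. If instead $a^L < b^L$, then $b^L$ is itself a positive number strictly between $a^L$ and $a$ (since $b^L < b < a$) whose birthday is strictly less than that of $\bs{b}$, so the inductive hypothesis applied with $\bs{b}$ replaced by $\bs{b}^L$ yields $l(\bs{b}^L) > l(\bs{a}^L)$, and hence $l(\bs{b}) = 1 + l(\bs{b}^L) > 1 + l(\bs{a}^L)$. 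The main obstacle is the simplicity-based step establishing $a^L \le b^L$; everything after that is a one-line induction, and no separate base case is required, since minimality of the birthday of $\bs{b}$ over positive numbers in $(a^L, a)$ would preclude the $a^L < b^L$ branch and force us into the trivial $a^L = b^L$ case.
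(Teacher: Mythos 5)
Your proof is correct and follows essentially the same route as the paper: use the simplicity of $\bs{a}$ (hence of $\bs{a}^L$ relative to $\bs{b}$) together with the simplicity of $\bs{b}$ in $(b^L,b^R)$ to force $a^L\leq b^L$, then split into the cases $a^L=b^L$ (direct, via $l(\bs{b})=1+l(\bs{b}^L)$) and $a^L<b^L$ (induction applied to the pair $a,b^L$). The only difference is cosmetic phrasing of the simplicity step (birthday comparison versus the paper's explicit contradiction), so nothing further is needed.
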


\begin{proof}
We have $a^L<b<a<a^R$, so $\bs{a}$ must be simpler than $\bs{b}$.  Thus $b^L\geq a^L$, since otherwise $b^L<a^L<b<b^R$ would imply that $\bs{b}$ is simpler than $\bs{a^L}$, which is simpler than $\bs{a}$.  Now, if $b^L = a^L$ then $l(\bs{a}^L) = l(\bs{b}^L)= l(\bs{b})-1< l(\bs{b})$, and if $b^L>a^L$ then by induction $a^L<b^L<b<a$ gives $l(\bs{a}^L)<l(\bs{b}^L)=l(\bs{b})-1<l(\bs{b})$. 
\end{proof}

Lemma \ref{orderlemma1} is used to prove Lemma \ref{orderlemma2} below, which is needed for the proof of Theorem \ref{numorder}. Note that in the following two arguments we frequently use the fact that, if $a\geqq b$ (mod $\mathcal{E}$), then Left wins on the position $a+\overline{b}+X$ whenever she wins $X\in \mathcal{E}$.

\begin{lem} \label{orderlemma2}
If $a$ and $b$ are positive  numbers such that $a^L<b< a$, then $\bs{a} \gneqq \bs{b}$ (mod $\mathcal{E}$).
\end{lem}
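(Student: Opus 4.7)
My plan is to reduce $\bs{a}\gneqq\bs{b}$ (mod $\mathcal{E}$) to $\bs{a}+\overline{\bs{b}}\gneqq 0$ (mod $\mathcal{E}$) via Theorem~\ref{rationalzero}; the strict part is immediate from Corollary~\ref{aneqb} (since $a\neq b$ forces $\bs{a}\not\equiv\bs{b}$ mod $\mathcal{E}$), so the task is to prove $o^-(\bs{a}+\overline{\bs{b}}+X)\geq o^-(X)$ for every $X\in\mathcal{E}$. I split this into (i) Left wins $X$ moving first implies Left wins $\bs{a}+\overline{\bs{b}}+X$ moving first, and (ii) the analogous statement for moving second, and prove both simultaneously by strong induction on the combined birthday of $\bs{a},\bs{b},X$. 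Statement (i) reduces to (ii) at once: Left plays her winning first move in $X$ and appeals to (ii) on the smaller resulting position.

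For (ii), I case-split on Right's first move. A move in $X$ is handled by IH~(i) on $X^R$. If Right plays $\bs{a}\to\bs{a}^R$, then $a$ is non-integer, and since no integer lies in $(a^L,a)\subset(n,n+1)$, $b$ is also non-integer, so $\bs{b}^R$ exists and Left responds $\overline{\bs{b}}\to\overline{\bs{b}^R}$. Two birthday/simplicity arguments set up the IH: $\bs{a}^{RL}$ has birthday strictly below $\bs{a}$, so it cannot lie in $(a^L,a^R)$, forcing $a^{RL}\leq a^L$; and $b^R\leq a$, since otherwise $a\in(b^L,b^R)$ would be simpler than $\bs{b}$ in its canonicity interval. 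Together $(a^R)^L<b^R<a^R$, so IH for Lemma~\ref{orderlemma2} on the smaller pair $(\bs{a}^R,\bs{b}^R)$ yields $\bs{a}^R\gneqq\bs{b}^R$ (mod $\mathcal{E}$), whence the resulting position has outcome $\geq o^-(X)\in\{\L^-,\P^-\}$, so Right loses.

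If Right plays $\overline{\bs{b}}\to\overline{\bs{b}^L}$, I further split on $b^L=a^L$ versus $b^L>a^L$ (the proof of Lemma~\ref{orderlemma1} established $b^L\geq a^L$). In the equality case Left responds $\bs{a}\to\bs{a}^L$ and Theorem~\ref{rationalzero} gives $\bs{a}^L+\overline{\bs{a}^L}+X\equiv X$, so Right loses. In the strict case $\bs{b}^L$ must be non-integer (a positive integer $b^L$ leaves no room for an $a$ satisfying the hypotheses), so $\bs{b}^{LR}$ exists and Left responds $\overline{\bs{b}^L}\to\overline{\bs{b}^{LR}}$. A simplicity argument analogous to the $b^R\leq a$ one shows $b^{LR}\leq a$: if $b^{LR}>a$ then $a\in(b^{LL},b^{LR})$ would be simpler than $\bs{b}^L$ in its canonicity interval. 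So either $b^{LR}=a$ (apply Theorem~\ref{rationalzero}) or $a^L<b^{LR}<a$ (apply IH for Lemma~\ref{orderlemma2} on $(\bs{a},\bs{b}^{LR})$), giving Right-loses in both sub-subcases.

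The hardest part is the subcase $b^L>a^L$ in Right's move $\overline{\bs{b}}\to\overline{\bs{b}^L}$: the natural response $\bs{a}\to\bs{a}^L$ fails because numerically $\bs{a}^L<\bs{b}^L$ makes this a losing move for Left. The crux is that Left must instead play inside $\overline{\bs{b}^L}$, and the canonicity observation $b^{LR}\leq a$ is exactly what makes the IH applicable.
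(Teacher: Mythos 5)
Your case analysis for Right's moves (your statement (ii)) is sound, and in places cleaner than the paper's: the simplicity facts $a^{RL}\leq a^L$, $b^R\leq a$, $b^{LR}\leq a$ and the non-integrality of $b$ and $b^L$ are all correctly justified, and your direct appeal to the induction hypothesis for the pairs $(\bs{a}^R,\bs{b}^R)$ and $(\bs{a},\bs{b}^{LR})$ avoids the paper's split into the case $b^R=a$ plus a transitivity step. (One cosmetic point: since your induction is on the combined birthday of $\bs{a},\bs{b},X$, you may not conclude the full modular inequality $\bs{a}^R\gneqq\bs{b}^R$ (mod $\mathcal{E}$) from the IH, as that quantifies over all of $\mathcal{E}$; but the instance you actually need --- statement (ii) for the same $X$ --- is available, so this is easily repaired.)

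The genuine gap is your reduction of (i) to (ii): ``Left plays her winning first move in $X$'' is vacuous precisely when $X$ is a left end (including $X=0$), because under mis\`ere play Left wins such an $X$ by \emph{having no move}, not by making one. In the sum $\bs{a}+\overline{\bs{b}}+X$ she does have moves and must choose one in $\bs{a}+\overline{\bs{b}}$, and your proposal supplies neither the move nor its justification; note this case is unavoidable (it is needed already for $X=0$, i.e.\ to show $\bs{a}+\overline{\bs{b}}\in\L^-\cup\N^-$, and it recurs inside (ii) whenever Right's move in $X$ produces a left end). This is exactly where the paper's proof does its real work: Left moves $\bs{a}$ to $\bs{a}^L$, and Lemma \ref{orderlemma1} gives $l(\bs{a}^L)<l(\bs{b})=r(\overline{\bs{b}})$, so Lemma \ref{rational+X} yields $\bs{a}^L+\overline{\bs{b}}+X\in\L^-$ for the left end $X$. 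Your argument never invokes either lemma, and some such left-length comparison (this is where the hypothesis $a^L<b$ enters quantitatively rather than just through simplicity) is needed to close the induction; without it the proof is incomplete.
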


\begin{proof}
Note that $b\not \in \mathbb{Z}$ since there are no integers between $a^L$ and $a$ if $\bs{a}$ is in canonical form.
We must show that Left wins $\bs{a}+\overline{\bs{b}}+X$ whenever she wins $X\in \mathcal{E}$.
  \\
\\{\em Case 1:} $b^R=a$.
\\ Left can win $\bs{a}+\overline{\bs{b}}+X$ by playing her winning strategy on $X$.   If Right moves in $\bs{a}+\overline{\bs{b}}$ to $\bs{a}^R+\overline{\bs{b}}+X'$, then Left responds to $\bs{a}^R+\overline{\bs{b}^R}+X' = \bs{a}^R+\overline{\bs{a}}+X'$, which she wins by induction since $a^{RL}\leq a^L$ (see Proposition \ref{propo}) gives $a^{RL}<a<a^R$. 
If Right moves to $\bs{a}+\overline{\bs{b}}^R+X' = \bs{b}^R+\overline{\bs{b}}^R+X'$, with $X'\in \L^-\cup \P^-$ (since Left is playing her winning strategy in $X$), then Left's response depends on whether $\bs{b}^{RL}=\bs{b}^L$ or $\bs{b}^{LR}=\bs{b}^R$: if the former, Left moves to $\bs{b}^{RL}+\overline{\bs{b}}^R+X' = \bs{b}^{L}+\overline{\bs{b}^L}+X'\equiv X'$ (mod $\mathcal{E}$); if the latter then Left moves to
$\bs{b}^R+\overline{\bs{b}^L}^L+X' = \bs{b}^{R}+\overline{\bs{b}^{LR}}+X' = \bs{b}^{R}+\overline{\bs{b}^{R}}+X' \equiv X'$.  In either case Left wins as the previous player on $X' \in \L^- \cup \P^-$.

When Left runs out of moves in $X$, she moves to $\bs{a}^L+\overline{\bs{b}}+X$. By Lemma \ref{orderlemma1} we know $l(\bs{a^L})<l(\bs{b})$, and this gives $\bs{a}^L+\overline{\bs{b}}+X\in \L^-$ by Lemma \ref{rational+X}.
\\ \ \\{\em Case 2:} $b^R \not = a$.
\\ Note that $b^R$ cannot be greater than $a$, since $a^L<b<a<a^R$ implies $\bs{a}$ is simpler than $\bs{b}$, while $b^L<b<a<b^R$ would imply that $\bs{b}$ is simpler than $\bs{a}$.  So $b^R<a$, and together with $a^L<b<b^R$ this gives $a^L<b^R<a$, which shows $\bs{a}\gneqq \bs{b}^R$ (mod $\mathcal{E}$) by induction.  Similarly $b^{RL}\leq b^L < b <b^R$ implies $\bs{b}^R\gneqq \bs{b}$ (mod $\mathcal{E}$), by Case 1.  Then by transitivity we have $\bs{a}\gneqq \bs{b}$ (mod $\mathcal{E}$).

\end{proof}

With Lemma \ref{orderlemma2}  we can now prove Theorem \ref{numorder} below.  The symmetric result for negative numbers also holds.   

\begin{thm} \label{numorder}
If $a$ and $b$ are positive numbers such that $a> b$ and $l(\bs{a}) \leq l(\bs{b})$, then $\bs{a} \gneqq \bs{b}$ (mod $\mathcal{E}$).  
\end{thm}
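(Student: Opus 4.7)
My plan is to prove $\bs{a} \gneqq \bs{b}$ (mod $\mathcal{E}$) by induction on the combined birthday of $a$ and $b$, following the strategy-proof template of Lemma \ref{orderlemma2}. Distinguishability $\bs{a} \not \equiv \bs{b}$ (mod $\mathcal{E}$) is immediate from Corollary \ref{aneqb}.

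The proof splits on the position of $a^L$ relative to $b$. If $a^L < b$, then $a^L < b < a$ and Lemma \ref{orderlemma2} supplies the conclusion at once. Otherwise $a^L \geq b$; here $a^L = b$ would force $l(\bs{b}) = l(\bs{a^L}) = l(\bs{a}) - 1$ and contradict $l(\bs{a}) \leq l(\bs{b})$, so $a^L > b$. The pair $(a^L, b)$ then satisfies the hypotheses of the theorem with strictly smaller combined birthday, giving $\bs{a^L} \gneqq \bs{b}$ (mod $\mathcal{E}$) by induction. Analogous inductive applications supply $\bs{a^R} \gneqq \bs{b}$ whenever $\bs{a^R}$ exists (using Lemma \ref{leftlengthR} to see $l(\bs{a^R}) \leq l(\bs{a}) \leq l(\bs{b})$), and---according to whether $l(\bs{a})<l(\bs{b})$ or $l(\bs{a})=l(\bs{b})$---either $\bs{a} \gneqq \bs{b^L}$ or $\bs{a^L} \gneqq \bs{b^L}$ (the latter via $a^L > b^L$ and $l(\bs{a^L}) = l(\bs{b^L})$).

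To finish, I describe a winning strategy for Left on $\bs{a} + \overline{\bs{b}} + X$ for any $X \in \mathcal{E}$ won by Left. Left follows her winning $X$-strategy when she can, and parries Right's disturbances in the numeric components by invoking the inductive inequalities: if Right plays $\bs{a} \to \bs{a^R}$, she continues via $\bs{a^R} \gneqq \bs{b}$; if Right plays $\overline{\bs{b}} \to \overline{\bs{b^L}}$, she continues directly via $\bs{a} \gneqq \bs{b^L}$ when $l(\bs{a}) < l(\bs{b})$, or counters with $\bs{a} \to \bs{a^L}$ and uses $\bs{a^L} \gneqq \bs{b^L}$ when $l(\bs{a}) = l(\bs{b})$. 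If Left opens and cannot move in $X$, she opens with $\bs{a} \to \bs{a^L}$ and invokes $\bs{a^L} \gneqq \bs{b}$.

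The main obstacle is verifying, in the style of Lemma \ref{orderlemma2}, that these sub-strategies stitch together coherently across both the ``Left is first in $X$'' and ``Left is second in $X$'' scenarios. Concretely, one must confirm that after each of Left's moves the $X$-component is left in the state required (Right-to-move, Left-winning) by whichever inductive inequality is being invoked, and that Left's preemptive opening, when required, does not leave her without a winning reply to Right.
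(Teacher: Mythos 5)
Your reductions are fine as far as they go (the split $a^L<b$ versus $a^L>b$, and the inductive inequalities $\bs{a^L}\gneqq \bs{b}$, $\bs{a^R}\gneqq \bs{b}$, $\bs{a^L}\gneqq\bs{b^L}$ all do hold), but the way you propose to \emph{use} those inequalities inside the strategy has a genuine gap. At the moments in question it is Left's turn and the $X$-component is only known to lie in $\L^-\cup\P^-$ (she has been following her winning strategy, so $X'$ is a position she wins \emph{moving second}). An inequality such as $\bs{a^R}+\overline{\bs{b}}\geqq 0$ (mod $\mathcal{E}$) then yields only $o^-(\bs{a^R}+\overline{\bs{b}}+X')\geq o^-(X')$, i.e.\ the total is in $\L^-\cup\P^-$ --- a second-player win --- whereas Left must move. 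So ``she continues via $\bs{a^R}\gneqq\bs{b}$'' (after Right plays $\bs{a}\to\bs{a^R}$) and ``she continues directly via $\bs{a}\gneqq\bs{b^L}$'' (after Right plays $\overline{\bs{b}}\to\overline{\bs{b^L}}$ when $l(\bs{a})<l(\bs{b})$) do not follow: you must exhibit Left's reply and only then invoke induction with Right to move. That replying move is exactly the hard content of the paper's proof and is absent from yours: when Right plays to $\bs{a}^R$, the paper has Left answer with $\bs{a}^{RL}$ if $a^{RL}>b$, and otherwise with $\overline{\bs{b}}\to\overline{\bs{b}^R}$, where a simplicity argument (via Proposition \ref{propo} and the canonical-form ``simplest number'' property) shows $b^R\leq a^R$ and hence $\bs{a}^R\gneqq\bs{b}^R$ or $\bs{a}^R=\bs{b}^R$. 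Nothing in your proposal addresses the comparison of $\bs{a}^R$ with $\bs{b}^R$ or the $a^{RL}$ versus $b$ dichotomy, and your closing paragraph explicitly defers this ``stitching'' -- but that is precisely where the proof lives.

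Two smaller points. In the branch where Right plays $\overline{\bs{b}}\to\overline{\bs{b^L}}$ you do not need the case split on $l(\bs{a})<l(\bs{b})$ versus $l(\bs{a})=l(\bs{b})$: the uniform reply $\bs{a}\to\bs{a}^L$ works, since $b^L<b\leq a^L$ and $l(\bs{b}^L)=l(\bs{b})-1\geq l(\bs{a})-1=l(\bs{a}^L)$, which is how the paper argues (and avoids the invalid ``direct continuation''). Finally, when Left runs out of moves in $X$ and plays $\bs{a}\to\bs{a}^L$, citing $\bs{a^L}\gneqq\bs{b}$ is not quite enough: the remaining $X$-component $X''$ is a dead left end that may be $0\in\N^-$, so the inequality only gives an outcome $\geq\N^-$, which does not guarantee a Left win with Right to move; the paper instead concludes via Lemma \ref{rational+X}, using $l(\bs{a}^L)<l(\bs{a})\leq l(\bs{b})=r(\overline{\bs{b}})$.
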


\begin{proof}  By Corollary \ref{aneqb} we have $\bs{a}\not \equiv \bs{b}$ (mod $\mathcal{E}$), and so it suffices to show $\bs{a} \geqq \bs{b}$ (mod $\mathcal{E}$).  Again we have $b\not \in \mathbb{Z}$.
Since $a>b$, if $b>a^L$ then Lemma \ref{orderlemma2} gives $\bs{a} \gneqq \bs{b}$ (mod $\mathcal{E}$) as required.  So assume $b\leq a^L$.  Again let $X\in \mathcal{E}$ be a game which Left wins playing first; we must show Left wins $\bs{a}+\overline{\bs{b}}+X$ playing first.  Left should follow her winning strategy from $X$.  If Right plays to $\bs{a} + \overline{\bs{b}^L}+X'$, where $X'\in \L^-\cup \P^-$, then Left responds with 
$\bs{a}^L+\overline{\bs{b}^L}+X'$, which she wins by induction: $b^L <b \leq a^L$ and $l(\bs{b}^L)=l(\bs{b})-1\geq l(\bs{a})-1=l(\bs{a}^L)$ implies $\bs{a}^L \gneqq \bs{b}^L$ (mod $\mathcal{E}$).

If Right plays to $\bs{a}^R + \overline{\bs{b}}+X'$ (assuming this move exists --- that is, assuming $a \not \in \mathbb{Z}$) then Left's response is $\bs{a}^{RL} + \overline{\bs{b}}+X'$, if $a^{RL}>b$, or  $\bs{a}^R + \overline{\bs{b}^R}+X'$ if $a^{RL}\leq b$.  In the first case Left wins by induction because
$a^{RL}>b$ and $l(\bs{a}^{RL})=l(\bs{a}^R)-1 \leq l(\bs{a})-1 < l(\bs{b})$ implies $\bs{a}^{RL}\gneqq \bs{b}$ (mod $\mathcal{E}$).
In the latter case, note firstly that in fact $a^{RL}\not = b$, since we have already seen that as games they have different left-lengths.  Then we see $a^{RL}<b<a<a^R<a^{RR}$, which shows $\bs{a}^R$ must be simpler than $\bs{b}$.  This gives $b^R\leq a^R$, as otherwise $b^{L}<b<a<a^R<b^R$ would imply that $\bs{b}$ is simpler than $\bs{a}^R$.   If $b^R=a^R$ then $\bs{b}^R=\bs{a}^R$, and if $b^R<a^R$ then we can apply Lemma \ref{orderlemma2} to conclude that  $\bs{a}^R\gneqq \bs{b}^R$ (mod $\mathcal{E}$). In either case, Left wins $\bs{a}^R + \overline{\bs{b}^R}+X'$, with $X'\in\L^-\cup \P^-$, as the second player.

Finally, if Left runs out of moves in $X$ then she moves to $\bs{a}^L+\overline{\bs{b}}+X''$ where $X''$ is a dead left end; then Left wins by Lemma \ref{rational+X} because $l(\bs{a}^L)<l(\bs{a})\leq l(\bs{b}) = r(\overline{\bs{b}})$.
\end{proof}

\begin{cor} \label{partialordercor} For positive numbers $a,b\in \mathbb{Q}_2$, $\bs{a} \gneqq \bs{b}$ (mod $\mathcal{E}$) if and only if $a>b$ and $l(\bs{a})\leq l(\bs{b})$.\end{cor}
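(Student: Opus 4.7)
The proof will just unpack the two halves of the biconditional and invoke previous results. The nontrivial direction ($\Leftarrow$) is already contained in Theorem \ref{numorder}, so the plan is simply to cite it. The main work is in the converse direction, and even there the ingredients are all available.

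For the ``if'' direction, assume $a > b$ and $l(\bs{a}) \leq l(\bs{b})$. Theorem \ref{numorder} immediately yields $\bs{a} \gneqq \bs{b}$ (mod $\mathcal{E}$), so there is nothing to do.

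For the ``only if'' direction, suppose $\bs{a} \gneqq \bs{b}$ (mod $\mathcal{E}$). I will split the conclusion into its two parts. The strict inequality $a > b$ follows in two steps: Theorem \ref{DorbecRSSthm} gives $a \geq b$ in normal play, and if $a = b$ then $\bs{a}$ and $\bs{b}$ are the same game (the canonical form of a dyadic rational is unique), contradicting $\bs{a} \not\equiv \bs{b}$ (mod $\mathcal{E}$). Alternatively, one can just cite Corollary \ref{aneqb} to rule out $a=b$ after obtaining $a\geq b$.

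The length inequality $l(\bs{a}) \leq l(\bs{b})$ is where I would pick a distinguishing game. Set $q = l(\bs{b}) \geq 1$, and consider $X = \overline{\bs{q}}$, the dead left end of right-length $q$. Since $X$ is a dead end, it lies in $\mathcal{E}$, so the hypothesis forces $o^-(\bs{a}+X) \geq o^-(\bs{b}+X)$. Suppose for contradiction that $l(\bs{a}) > q$. Then by Lemma \ref{rational_outcome} applied to the sum of the positive number $\bs{b}$ and the negative integer $\overline{\bs{q}}$, we get $k = l(\bs{b}) - r(\overline{\bs{q}}) = 0$, whence $\bs{b}+X \in \N^-$; whereas for $\bs{a}+X$ the same computation gives $k = l(\bs{a}) - q > 0$, so $\bs{a}+X \in \R^-$. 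Since $\R^- < \N^-$ in the outcome ordering, this contradicts $\bs{a} \geqq \bs{b}$ (mod $\mathcal{E}$).

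There is essentially no obstacle: both directions are short consequences of results already in place (Theorem \ref{numorder}, Theorem \ref{DorbecRSSthm}, Corollary \ref{aneqb}, and Lemma \ref{rational_outcome}). The only thing requiring any thought is choosing the test game $X = \overline{\bs{q}}$ so that $\bs{b}+X$ sits exactly on the boundary ($k=0$, hence $\N^-$) while pushing $\bs{a}+X$ over to $\R^-$.
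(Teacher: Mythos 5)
Your proof is correct and follows essentially the same route as the paper: the forward direction is exactly Theorem \ref{numorder}, and the converse combines Theorem \ref{DorbecRSSthm} (to force $a>b$) with a distinguishing position whose outcome is read off from Lemma \ref{rational_outcome}. The paper's only immaterial difference is that it takes $X=\overline{\bs{b}}$ rather than the negative integer of right-length $l(\bs{b})$, yielding the same comparison $\bs{b}+X\in \N^-$ versus $\bs{a}+X\in \R^-$.
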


\begin{proof}
We need only prove the converse of Theorem \ref{numorder}.  Suppose $a>b$ and $l(\bs{a})>l(\bs{b})$; then by \cite{DorbecRSS} it cannot be that $\bs{a} \leqq \bs{b}$ (mod $\mathcal{E}$), so we need only show $\bs{a}\not \geqq \bs{b}$ (mod $\mathcal{E}$). 
We have $\bs{b}+\overline{\bs{b}}\in \N^-$, while $\bs{a}+\overline{\bs{b}}\in \R^-$, since in isolation the latter sum is equivalent to the positive integer $l(\bs{a})-l(\bs{b})$, by Theorem \ref{rational_monoid}. Thus $\bs{a}\not \geqq \bs{b}$ (mod $\mathcal{E}$).

\end{proof}

To completely describe the partial order of numbers within $\mathcal{E}$, it remains to consider the comparability of $\bs{a}$ and $\bs{b}$ when $a\geq 0$ and $b<0$ (or, symmetrically, when $a>0$ and $b\leq 0$).  As before we cannot have $\bs{a}\leqq \bs{b}$ (mod $\mathcal{E}$), and  the same argument as above ($\bs{b}+\overline{\bs{b}} \in \N^-$ and $\bs{a}+\overline{\bs{b}}\in \R^-$) shows $\bs{a} \not \geqq \bs{b}$ (mod $\mathcal{E}$)
.  The results of this section are summarized below.

\begin{thm}
The partial order of $\mathbb{Q}_2$, modulo $\mathcal{E}$, is given by
\begin{center}
\begin{tabular}{rl}
$\bs{a} \equiv \bs{b} \;($mod $\mathcal{E})$ & if $a=b$;\\
$\bs{a}\gneqq \bs{b} \;($mod $\mathcal{E})$ & if $0<a<b$ and $l(\bs{a})\leq l(\bs{b}),$\\
& or $b<a<0$ and $r(\bs{b})\leq r(\bs{a})$;\\
$\bs{a} || \bs{b}  \;($mod $\mathcal{E})$ & otherwise.\\
\end{tabular}
\end{center}
\end{thm}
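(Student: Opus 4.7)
The plan is to assemble the theorem from prior results and a single elementary case analysis for mixed signs; no genuinely new arguments are needed. The first clause is immediate: $\bs{a}\equiv \bs{b}$ (mod $\mathcal{E}$) when $a=b$ is trivial, and the converse is exactly Corollary \ref{aneqb}.

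For the strict inequality clause I would split by sign. When $a$ and $b$ are both positive the stated condition is precisely Corollary \ref{partialordercor}. For the case of two negative numbers I would derive the negative analogue by applying Corollary \ref{partialordercor} to the positive numbers $\overline{\bs{a}}$ and $\overline{\bs{b}}$ and translating back via the identities $l(\overline{\bs{c}})=r(\bs{c})$, $r(\overline{\bs{c}})=l(\bs{c})$, together with $\bs{x}\geqq \bs{y}\iff \overline{\bs{y}}\geqq \overline{\bs{x}}$ (valid because $\mathcal{E}$ is closed under conjugation). The resulting positive condition ``$\overline{\bs{a}}>\overline{\bs{b}}$ and $l(\overline{\bs{a}})\leq l(\overline{\bs{b}})$'' is then exactly ``$b<a<0$ and $r(\bs{b})\leq r(\bs{a})$''.

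For the third clause (incomparability) I would break into subcases. If $a$ and $b$ have the same sign but fail the length condition of the second clause, Corollary \ref{partialordercor} (and its conjugate version above) directly rule out both $\gneqq$ and $\lneqq$. If $a$ and $b$ have opposite signs, say $a\geq 0$ and $b<0$, then $a>b$ as real numbers, so Theorem \ref{DorbecRSSthm} forbids $\bs{a}\leqq \bs{b}$ (mod $\mathcal{E}$); for the other direction I use the distinguishing game $X=\overline{\bs{b}}\in\mathcal{E}$, noting that $\bs{b}+\overline{\bs{b}}\in \N^-$ by Theorem \ref{rationalzero} combined with Lemma \ref{rational_outcome}, whereas $\bs{a}+\overline{\bs{b}}$ has positive left-excess $l(\bs{a})+r(\bs{b})\geq 1$ and so lies in $\R^-$ by Lemma \ref{rational_outcome}. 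The mirror-image subcase ($a>0$, $b\leq 0$) is handled symmetrically.

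There is no real obstacle here; the theorem is a consolidation of Corollary \ref{partialordercor}, its conjugate version, and one outcome computation. The only care needed is the conjugation bookkeeping --- swapping left- and right-lengths and reversing the inequality on $a$ and $b$ --- so that the negative halves of the middle and third clauses line up correctly with their positive counterparts.
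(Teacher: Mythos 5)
Your proposal is correct and takes essentially the same route as the paper, which presents this theorem as a summary assembled from Corollary \ref{aneqb}, Corollary \ref{partialordercor} (together with its conjugate version for negative numbers), and exactly the mixed-sign computation you give ($\bs{b}+\overline{\bs{b}}\in\N^-$ while $\bs{a}+\overline{\bs{b}}\in\R^-$, so $\bs{a}\not\geqq\bs{b}$, with Theorem \ref{DorbecRSSthm} ruling out the other direction). One small remark: the printed positive clause ``$0<a<b$'' is evidently a typo for $0<b<a$, as both Corollary \ref{partialordercor} and the negative clause indicate, and your reading implicitly corrects it.
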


\section{Zeros in the dead-ending universe}
We have found that integer and non-integer numbers, as well as all ends, satisfy $G+\overline{G}\equiv 0$ (mod $\mathcal{E}$).  It is not the case that every game in $\mathcal{E}$ has an additive inverse; for example, $*+*\not \equiv 0$ (mod $\mathcal{E})$, although the equivalence does hold in the dicot universe $\mathcal{D} \subset \mathcal{E}$.  Likewise, many familiar `all-small' games from normal-play, which have inverses in the dicot universe\footnote{The games $\uparrow = \{0 \v *\}$, $\downarrow = \{* \v 0\}$, and all other day-2 \mise dicots with the exception of $*2 = \{0,* \v 0,*\}$, are shown to be invertible modulo $\mathcal{D}$ in the first author's thesis (in progress),  using Lemma \ref{zerolemmaS}.}, are not invertible here.  

The following lemma describes an infinite family of games that are not invertible in the universe of dead-ending games.  

\begin{lem}
If $G=\{\bs{n_1},\ldots,\bs{n_k} \v \overline{\bs{n_1}}, \ldots, \overline{\bs{n_k}}\}$  with each $n_i\in \mathbb{N}$, then $G+\Gb \not \equiv 0$ (mod $\mathcal{E}$).
\end{lem}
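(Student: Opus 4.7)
The plan is to exhibit a distinguishing game $X\in\mathcal{E}$ such that $o^-(G+\overline{G}+X)\neq o^-(X)$, which will establish $G+\overline{G}\not\equiv 0\pmod{\mathcal{E}}$.

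First I would observe that $\overline{G}=G$: conjugation swaps the Left and Right option sets of $G$, but those sets are already symmetric, since the Left options $\bs{n_i}$ become the Right options $\overline{\bs{n_i}}$ under conjugation and vice versa. The goal therefore reduces to $G+G\not\equiv 0\pmod{\mathcal{E}}$. A direct check also shows $G\in\P^-$: any opening move lands in a dead end ($\bs{n_i}$ or $\overline{\bs{n_i}}$) in which the opponent cannot reply and therefore wins under mis\`ere play.

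The construction I would use is
\[
X \;=\; \{\cdot \v \overline{\bs{n_1}},\ldots,\overline{\bs{n_k}}\}.
\]
Every Right option of $X$ is a dead left end, so $X$ is itself a dead left end, hence $X\in\mathcal{E}$, and by Lemma \ref{leftendL} (with $X\neq 0$ since $k\geq 1$), $X\in\L^-$.

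I would then show that $o^-(G+G+X)=\P^-$, which differs from $\L^-=o^-(X)$ and finishes the proof. For Left moving first in $G+G+X$: she must play in some copy of $G$ (as $X$ is a left end), say $G\to\bs{n_\ell}$; Right replies $X\to\overline{\bs{n_\ell}}$, and by Corollary \ref{int_zero} the resulting position $\bs{n_\ell}+G+\overline{\bs{n_\ell}}$ is equivalent to $G\in\P^-$ with Left to move, so Left loses. For Right moving first: if Right plays $X\to\overline{\bs{n_j}}$, Left replies $G\to\bs{n_j}$ in either copy of $G$, producing a position equivalent to $G$ with Right to move, so Right loses; if Right instead plays $G\to\overline{\bs{n_j}}$ in one copy, Left replies $G\to\bs{n_j}$ in the \emph{other} copy, producing a position equivalent to $X\in\L^-$ with Right to move, so Right loses.

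The main obstacle is guessing the right $X$. Its defining feature is that it is a dead left end whose Right options are exactly $G$'s Right options, so that whatever $\bs{n_\ell}$ Left commits to in $G$, Right can pair with $X\to\overline{\bs{n_\ell}}$, and symmetrically in the Right-first case. Once $X$ is in hand, the verification reduces to a short case analysis that repeatedly invokes Corollary \ref{int_zero} ($\bs{n}+\overline{\bs{n}}\equiv 0\pmod{\mathcal{E}}$) together with $G\in\P^-$ and $X\in\L^-$.
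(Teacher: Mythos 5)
Your proof is correct and takes essentially the same approach as the paper, whose distinguishing position is the conjugate of yours, $X=\{\bs{n_1},\ldots,\bs{n_k} \v \cdot\}\in\R^-$, with the same pairing strategy: answer the opponent's committed move to $\pm\bs{n_i}$ with the matching conjugate in the auxiliary end and use $\bs{n_i}+\overline{\bs{n_i}}\equiv 0$ (mod $\mathcal{E}$) together with the outcome of $G$. Your extra verification that $G+G+X\in\P^-$ is sound but more than needed, since showing Left loses moving first already rules out $o^-(G+G+X)=\L^-=o^-(X)$.
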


\begin{proof}
Let $X=\{\bs{n_1},\ldots, \bs{n_k}\v \cdot \} \in \R^-$. Note that $G=\Gb$. We describe a winning strategy for Left playing second in the game $G+\Gb+X= G+G+X$.  Right has no first move in $X$, so Right's move is of the form $G+\overline{\bs{n_i}}+X$.  Left can  respond by moving $X$ to $\bs{n_i}$, leaving $G+0$.  Now Right plays in $G$ to a nonpositive integer, which as a right end must be in $\L^-$ or $\N^-$. 
\end{proof}

We conclude with an infinite family of games that are equivalent to zero in the dead-ending universe, which are not all of the form $G+\Gb$ for some $G$.  These games are illustrated in Figure \ref{zerofamily}.

\begin{thm} 
If $G$ is a dead-ending game such that every $G^L$ is a left end with an option to zero and every $G^R$ is a right end with an option to zero, then $G\equiv 0$ (mod ${\mathcal{E}}$).
\end{thm}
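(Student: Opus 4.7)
The plan is to prove directly that $o^-(G+X)=o^-(X)$ for every $X\in\mathcal{E}$, by exploiting a symmetry built into the hypothesis together with an induction on the birthday of $X$. First I would note that $\overline{G}$ also satisfies the hypothesis --- conjugation swaps left and right ends and fixes the zero option --- so it suffices to prove the two Left-side implications: (A) if Left wins $X$ playing first, then Left wins $G+X$ playing first; and (B) if Left wins $X$ playing second, then Left wins $G+X$ playing second. The corresponding Right-side statements for $G$ then follow by applying (A) and (B) with $\overline{G}$ and $\overline{X}$ in place of $G$ and $X$ and conjugating the conclusion. Since an outcome class is determined by the pair (first-player winner, second-player winner), combining these four implications forces $o^-(G+X)=o^-(X)$ and hence $G\equiv 0$ (mod $\mathcal{E}$).

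For the base case $X=0$, I need $G\in\N^-$: by hypothesis each $G^L$ is a non-zero dead left end and so lies in $\L^-$ by Lemma \ref{leftendL}, and symmetrically each $G^R\in\R^-$, so whichever player moves first in $G$ wins outright. In the inductive step of (A), if $X$ has a left option then Left plays her winning move $X\to X^L$, and by inductive (B) applied to the smaller $X^L$ she wins $G+X^L$ as the second player; in the subcase where $X$ is a non-zero left end, Left instead plays $G\to G^L$ and lands in $G^L+X$, which is a non-zero dead left end and so belongs to $\L^-$ by Lemma \ref{leftendL}.

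The heart of the argument is (B). Suppose Left wins $X$ as second player and Right moves first in $G+X$. If Right plays $X\to X^R$, then Left wins $X^R$ as first player, and by inductive (A) for $X^R$ she wins $G+X^R$ as first player. The essential case is Right's move $G\to G^R$: here Left uses the option $G^R\to 0$ guaranteed by the hypothesis, collapsing the position to $X$ with Right to move, whereupon Left's assumed second-player strategy in $X$ takes over. Isolating this cancelling response --- which is precisely the move the hypothesis has been engineered to supply --- is the main conceptual step; once it is in place, the induction runs mechanically. I expect the chief obstacle to be purely clerical: verifying the birthday decrease at each appeal to (A) or (B), and handling the minor case split in (A) around the possibility that $X$ itself is a left end.
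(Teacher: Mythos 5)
Your proposal is correct and follows essentially the same route as the paper: the cancelling response $G^R\to 0$ when Right plays in $G$, deferring to Left's strategy in $X$ otherwise, and invoking Lemma \ref{leftendL} on the dead left end $G^L+X''$ when $X$ is exhausted, with the Right-hand implications obtained by conjugate symmetry. You have merely made the paper's strategy-following argument explicit as an induction on the birthday of $X$ (statements (A) and (B)), which is a fine formalization of the same idea.
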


\begin{proof}
Let $X$ be any game in $\mathcal{E}$ and suppose Left wins $X$.  Then Left wins $G+X$ by following her strategy in $X$.  If Right plays in $G$ then he moves to $G^R+X'$ from a position $G+X'$ with $X'\in \L^-\cup \P^-$; Left can respond to $0+X'$ and win as the second player.  If both players ignore $G$ then eventually Left runs out of moves in $X$ and plays to $G^L+X''$, where $X''$ is a left end.  But $G^L$ is also a left end, so the sum is  a left-win by Lemma \ref{leftendL}.
\end{proof}

\begin{figure}[htp] 
\begin{center}
\includegraphics[height=4cm]{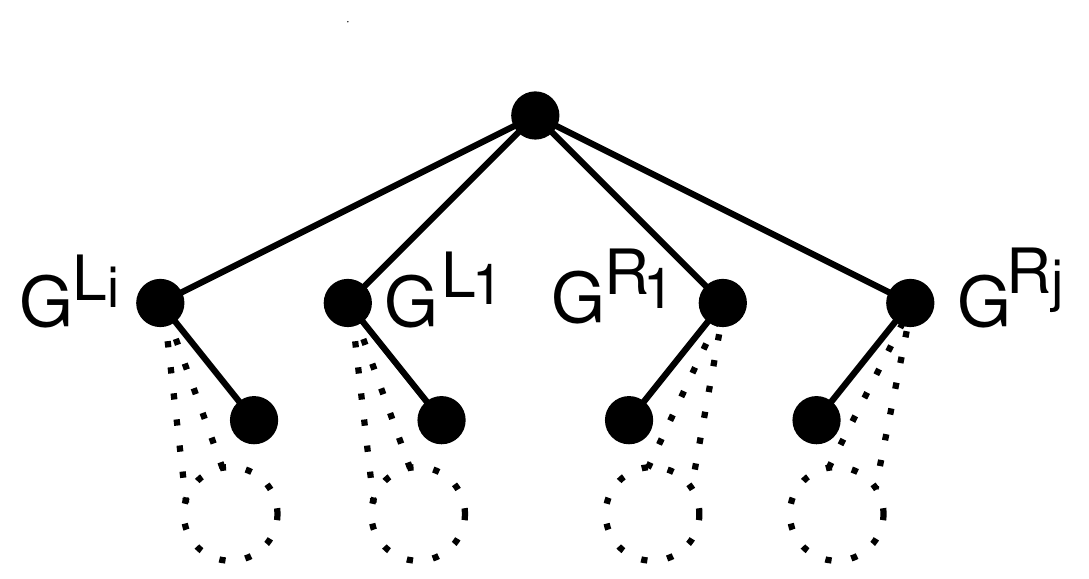}
\caption{An infinite family of games equivalent to zero modulo $\mathcal{E}$ ($i\geq 1, j\geq 1$).}
\label{zerofamily}
\end{center}
\end{figure}

\section{Future directions}
From this first initial investigation, the universe of dead-ending games appears to be filled with  potential for successful \mise analysis.  It includes as subuniverses many of the games that have already excited interest among combinatorial game theorists;  we hope some of the techniques of the present paper can be fruitfully applied to these subuniverses.

A natural extension of this work would include analysis of specific games, such as {\sc nogo, col, snort}, etc., in the context of the dead-ending universe.  It would also be interesting to consider other properties, besides dead-ending, of the `placement games' described in our opening paragraph.  

\section{Acknowledgments}
The authors greatly appreciate the comments and suggestions provided by Paul Dorbec, Neil McKay, Richard Nowakowski, Aaron Siegel, and \'{E}ric Sopena.

\section{References}

\end{document}